\newtheorem{theorem}{Theorem}[]
\newtheorem{definition}[theorem]{Definition}
\newtheorem{lemma}[theorem]{Lemma}
\newtheorem{corollary}[theorem]{Corollary}
\titleformat{\section}[display]{\normalfont\huge\bfseries\centering}{\centering\chaptertitlename\thechapter}{10pt}{\Large}
\titlespacing*{\section}{0pt}{0ex}{0ex}
\titlespacing*{\section}{0pt}{1.5ex plus 1ex minus .2ex}{1.3ex plus .2ex}
\begin{document}
\title{Finding Squares in a Product involving Squares} 
\author{Thang Pang Ern}
\address{Department of Mathematics, National University of Singapore, 10 Lower Kent Ridge Road, Singapore 119076}
\email{thangpangern@u.nus.edu}
\maketitle

\begin{abstract}
We wish to discuss positive integer solutions to the Diophantine equation \begin{align*}
    \prod_{k=1}^n(k^2+1)=b^2.
\end{align*}
Some methods in analytic number theory will be used to tackle this problem.
\end{abstract} 

\bigskip

\section{Introduction}
We motivate our discussion with a simple geometry problem mentioned on Numberphile in 2014. Arrange three congruent squares $ABFE$, $BCGF$, and $CDHG$ in the following manner (Figure \ref{fig:The three square geometry problem}). From vertex $D$, construct a line segment from it to the bottom-left vertex of each square — the vertices are namely $E$, $F$, and $G$. Let $\angle DEF=\alpha$, $\angle DFG=\beta$, and $\angle DGH=\gamma$. It can be proven using elementary geometry, that $\alpha+\beta+\gamma=\pi/2$.
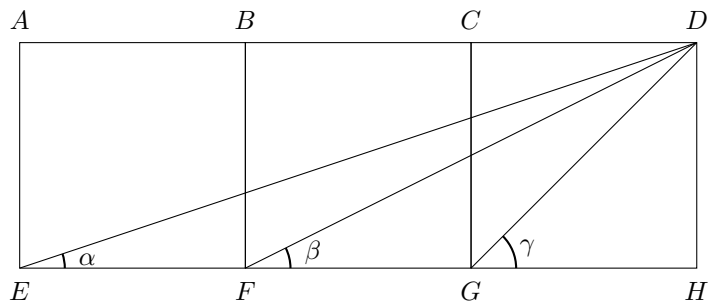
\begin{figure}[h!]
    \centering
    \begin{tikzpicture}[scale=1.5]

\coordinate (A) at (0, 2);
\coordinate (B) at (2, 2);
\coordinate (C) at (4, 2);
\coordinate (D) at (6, 2);
\coordinate (E) at (0, 0);
\coordinate (F) at (2, 0);
\coordinate (G) at (4, 0);
\coordinate (H) at (6, 0);

\draw (A) -- (B) -- (F) -- (E) -- cycle;
\draw (B) -- (C) -- (G) -- (F) -- cycle;
\draw (C) -- (D) -- (H) -- (G) -- cycle;

\draw (E) -- (D);
\draw (F) -- (D);
\draw (G) -- (D);

\node at (0.6, 0.08) {$\alpha$};
\node at (2.6, 0.14) {$\beta$};
\node at (4.5, 0.18) {$\gamma$};

\draw[thick] (0.4,0) arc[start angle=0, end angle=18.4, radius=0.4];
\draw[thick] (2.4,0) arc[start angle=0, end angle=26.56, radius=0.4];
\draw[thick] (4.4,0) arc[start angle=0, end angle=45, radius=0.4];
\node at (0, 2.2) {$A$};
\node at (2, 2.2) {$B$};
\node at (4, 2.2) {$C$};
\node at (6, 2.2) {$D$};
\node at (0, -0.2) {$E$};
\node at (2, -0.2) {$F$};
\node at (4, -0.2) {$G$};
\node at (6, -0.2) {$H$};

\end{tikzpicture}
    \caption{The three square geometry problem}
    \label{fig:The three square geometry problem}
\end{figure}
\newblock
\newline As $\triangle DGH$ is isosceles, it follows that $\gamma=\pi/4$. Thus, this identity is reduced to $\alpha+\beta=\pi/4$.
\newline
\newline In contrast, using complex numbers makes the problem far simpler. In particular, using arguments, it is easy to show that \begin{align}
    \arg \left(( 1+i)(2+i)(3+i) \right)=\arg(1+i)+\arg(2+i)+\arg(3+i)=\frac{\pi}{2}.
\end{align}
Naturally, we can extend the problem to $n$ squares. As such, which values of $n$, where $n$ is a positive integer, yield the sum of angles being some rational multiple of $\pi$? That is to say, does there exist $p,q\in\mathbb{Z}$, where $q\ne 0$, such that \begin{align}
    \sum_{k=1}^{n}\theta_k=\sum_{k=1}^{n}\operatorname{arctan}\left(1/k\right)=\frac{p}{q}\cdot \pi?
\end{align}
If the sum of angles $\theta_1+\ldots+\theta_n$ converges, then we have many cases to consider (other than the case when $n=3$). However, it turns out that the sum of angles diverges \cite{arctangent}. That is, \begin{align}
\lim_{n\rightarrow\infty}\sum_{k=1}^{n}{{{\arctan}\left( \frac{1}{k} \right)}}\quad\text{is a divergent series}. 
\end{align}
One can use the integral test to verify this. Moreover, attempts to tackle the problem by constructing a recurrence relation involving the addition formula of inverse tangent are futile as there are no \textit{nice} properties of the following formula: \begin{align}
    \arctan u+\arctan v=\arctan \left( \frac{u+v}{1-uv} \right)
\end{align}
\section{The Number-Theoretic Connection}
This question can be regarded as a number theoretic one. Note that for $1\le k\le n$, the modulus of the complex number $k+i$ is $\sqrt{k^2+1}$, and every $z\in\mathbb{C}$ can be expressed as $z=a+bi$, where $a,b\in\mathbb{R}$. If \begin{align}
    \prod_{k=1}^{n}(k+i)
\end{align} is purely imaginary, then $z=bi$ (i.e. $a=0$), and so $|z|=b$. Consider the product \begin{align}
    \prod_{k=1}^n{\sqrt{k^2+1}}
\end{align} which is the product of the moduli of the complex numbers $k+i$ for all $1 \le k \le n$. Hence, the problem is equivalent to finding all pairs of natural numbers $\left(b,n\right)$ such that \begin{align}
    \prod_{k=1}^n(k^2+1)=b^2.
\end{align}
We wish to find all solutions to the Diophantine equation in (7), and we assert that $(b,n)=(10,3)$ is the only solution. This would conclude that the product in (5) is purely imaginary if and only if $n=3$. 
\newline
\newline In this paper, we will discuss and expand on a proof given by Cilleruelo. It aims to make his proof more accessible by explaining the parts that were deemed trivial. Moreover, as shown in (79), we will obtain a better bound for our problem, reducing the time taken to seek all positive integer solutions to the Diophantine equation.
\newline
\newline We first discuss the following problem from the 17th China Western Mathematical Invitational Competition in 2017. It is from Problem 1 of Day 1 of the competition \cite{cwmo}. For any prime $p$, this result would help us find a bound for $p$ in terms of $n$. 
\begin{theorem}
    Let $p$ be a prime and $n$ be a positive integer such that $p^2$ divides \begin{align}
    \prod_{k=1}^n(k^2+1).
\end{align} Then, $p<2n$.
\end{theorem}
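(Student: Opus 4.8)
The plan is to argue by contradiction: assume $p \geq 2n$ and show that $p^2$ cannot divide $\prod_{k=1}^n(k^2+1)$. The first step is the structural observation that if $p^2$ divides this product, then one of two cases must occur: either (a) $p^2 \mid k^2+1$ for some $k$ with $1 \leq k \leq n$, or (b) there exist two distinct indices $1 \leq k_1 < k_2 \leq n$ with $p \mid k_1^2+1$ and $p \mid k_2^2+1$. This is because the exponent of $p$ in $\prod_{k=1}^n(k^2+1)$ equals $\sum_{k=1}^n e_k$, where $e_k$ denotes the exponent of $p$ in $k^2+1$, and a sum of non-negative integers is at least $2$ exactly when some $e_k \geq 2$ or at least two of the $e_k$ are positive. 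These two cases are exhaustive.

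Next I would rule out each case under the standing assumption $p \geq 2n$. In case (a), $p^2 \mid k^2+1$ forces $p^2 \leq k^2+1 \leq n^2+1$, so $4n^2 \leq p^2 \leq n^2+1$, which is impossible for every positive integer $n$. In case (b), from $k_1^2 \equiv k_2^2 \equiv -1 \pmod{p}$ we get $p \mid k_2^2 - k_1^2 = (k_2-k_1)(k_2+k_1)$, and since $p$ is prime it must divide one of the two factors; but $0 < k_2 - k_1 \leq n-1 < p$ and $0 < k_1 + k_2 \leq 2n-1 < 2n \leq p$, so $p$ divides neither, a contradiction. Therefore $p < 2n$.

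I do not expect a genuine obstacle here: the argument is entirely elementary, and the only points needing care are verifying that the dichotomy (a)/(b) is exhaustive and keeping the inequalities on $k_1 \pm k_2$ straight. It may also be worth remarking (though it is not needed for the bound) that a prime dividing some $k^2+1$ must be $2$ or congruent to $1$ modulo $4$, since $-1$ is then a quadratic residue modulo $p$; this explains why primes such as $3$ never occur. Conceptually, the statement says that the short interval $[1,n]$ cannot contain two ``independent'' square roots of $-1$ modulo $p$ once $p \geq 2n$, because any two such roots would force $p$ to divide either their difference or their sum, each of which is nonzero and strictly smaller than $p$.
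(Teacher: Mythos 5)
Your proposal is correct and follows essentially the same argument as the paper: the same dichotomy (either $p^2$ divides a single factor $k^2+1$, or $p$ divides two distinct factors, whence $p\mid(k_2-k_1)(k_2+k_1)$), with the only difference being that you phrase it as a contradiction from $p\ge 2n$ while the paper argues directly. Your explicit justification that the two cases are exhaustive, via the additivity of the $p$-adic valuation, is a point the paper glosses over.
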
 \begin{proof}
Define $P_n$ to be the above product. Since $p^2\mid P_n$, then $p\mid P_n$. One case to consider is $p^2\mid (k^2+1)$ for some $1 \le k \le n$. Then, $p^2<n^2+1$ and hence, $p<\sqrt{n^2+1}<2n$. Suppose otherwise, then there exists $1 \le a<b\le n$ such that $p\mid (a^2+1)$ and $p\mid (b^2+1)$, and so, taking the difference, $p\mid (b+a)(b-a)$. As $a+b<2n$ and $b-a>0$, $b^2-a^2<2n$. Thus, $p$ is a divisor of some number less than $2n$ and the result follows. 
\end{proof}

\begin{theorem}\label{pn non square}
For $n>3$, \begin{align}
    P_n=\prod_{k=1}^n(k^2+1)\quad\text{is non-square}.
\end{align}
\end{theorem}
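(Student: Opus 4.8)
The plan is to argue by contradiction. Suppose $P_n=b^{2}$ with $n>3$. I would first use a few elementary congruence observations together with Theorem 1 to show that $P_n$ can be divisible only by very few primes, all of them small, and then contrast the resulting upper bound for $\ln P_n$ with the trivial lower bound $P_n>(n!)^{2}$. The inequality so obtained is impossible once $n$ exceeds an explicit constant $N_{0}$, after which the finitely many cases $4\le n\le N_{0}$ are settled by computation.

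First I would record the structural facts. Because $k^{2}+1\equiv 2\pmod 4$ for odd $k$ and is odd for even $k$, we have $v_{2}(P_n)=\lceil n/2\rceil$; if $n\equiv 1$ or $2\pmod 4$ this is odd, so $P_n$ is already non-square and we may assume $n\equiv 0$ or $3\pmod 4$. If an odd prime $p$ divides $P_n$ then $k^{2}\equiv -1\pmod p$ for some $k$, forcing $p\equiv 1\pmod 4$. Finally, Theorem 1 in contrapositive form says $p\ge 2n\Rightarrow p^{2}\nmid P_n$, i.e.\ $v_{p}(P_n)\le 1$; since $P_n=b^{2}$ makes every exponent even, $v_{p}(P_n)=0$ for all $p\ge 2n$. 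Hence $P_n$ is a product of the prime $2$ and of primes $p\equiv 1\pmod 4$ with $p<2n$ — only ``half'' the primes below $2n$, which is precisely what lets the estimate close.

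Next I would bound the exponents. For an odd prime $p\mid P_n$ the congruence $k^{2}\equiv -1\pmod{p^{j}}$ has at most two solutions modulo $p^{j}$, so $\#\{1\le k\le n:p^{j}\mid k^{2}+1\}\le 2n/p^{j}+2$, and these counts vanish once $p^{j}>n^{2}+1$; summing over $j$ gives $v_{p}(P_n)\le \frac{2n}{p-1}+\frac{2\ln(n^{2}+1)}{\ln p}$, while $v_{2}(P_n)\le \frac{n+1}{2}$. Consequently
\begin{align*}
\ln P_n
&=\sum_{\substack{p<2n\\ p=2\ \text{or}\ p\equiv 1\,(4)}}v_{p}(P_n)\ln p\\
&\le \frac{n+1}{2}\ln 2+2n\sum_{\substack{p<2n\\ p\equiv 1\,(4)}}\frac{\ln p}{p-1}+2\,\pi(2n)\ln(n^{2}+1).
\end{align*}
Now I would invoke the Mertens-type estimate for the residue class $1$ modulo $4$, namely $\sum_{p\le x,\ p\equiv 1\,(4)}(\ln p)/p=\tfrac12\ln x+O(1)$, the convergence of $\sum_{p}(\ln p)/\bigl(p(p-1)\bigr)$, and a Chebyshev bound $\pi(2n)=O(n/\ln n)$. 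These make the right-hand side equal to $n\ln(2n)+O(n)=n\ln n+O(n)$, with constants that can be written out explicitly; making them sharper is where the stronger bound alluded to in the introduction would come from.

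On the other side, $P_n>\prod_{k=1}^{n}k^{2}=(n!)^{2}$, so $\ln P_n>2\ln n!\ge 2(n\ln n-n)$. Comparing the two estimates gives $2n\ln n-2n\le n\ln n+O(n)$, hence $n\ln n\le O(n)$ and $n\le N_{0}$ for an explicit $N_{0}$; the decisive feature is that restricting to $p\equiv 1\pmod 4$ halves the coefficient of $n\ln n$ on the right (from $2$ to $1$), so the inequality genuinely fails for large $n$. It then remains only to check computationally that $P_n$ is non-square for $4\le n\le N_{0}$, where Theorem 1 helps again since one need inspect only the exponents of primes below $2n$. I expect the main obstacle to be this quantitative part — keeping the constants in the Mertens sum and in $\pi(2n)$ tight enough that $N_{0}$ is small enough for the finite verification to be practical — while the structural reductions above are short and form the conceptual heart of the argument.
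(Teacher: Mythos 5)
Your proposal is correct and rests on the same pillars as the paper's argument: Theorem 1 confining all prime divisors of $P_n$ to $p<2n$, the quadratic-residue lemma forcing odd divisors to be $\equiv 1\pmod 4$, the ``two solutions of $k^2\equiv -1\pmod{p^j}$'' count giving $v_p(P_n)\le \frac{2n}{p-1}+\frac{2\log(n^2+1)}{\log p}$, the comparison with $(n!)^2$, and a Mertens/Chebyshev estimate closing the asymptotic gap before a finite verification. The one genuine difference is organizational: you apply the Mertens-type estimate to the primes $p\equiv 1\pmod 4$ that \emph{do} divide $P_n$, obtaining $\log P_n\le n\log n+O(n)$ directly, whereas the paper subtracts the factorization of $(n!)^2$ from that of $P_n$ and isolates the primes $p\not\equiv 1\pmod 4$ that are \emph{missing} from $P_n$, arriving at the bounded quantity $\sum_{p\le n,\,p\not\equiv 1\,(4)}\frac{\log p}{p-1}<4+\frac{\log 2}{4}$. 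The paper's arrangement has a practical payoff you should note: its left-hand side is a monotonically increasing, directly computable sum, so the explicit threshold ($n\ge 1831$) is certified by a single numerical evaluation, while your version requires explicit constants in Mertens' theorem for the progression $1\bmod 4$, which is exactly where you anticipate the difficulty. Two smaller remarks: your observation that $v_2(P_n)=\lceil n/2\rceil$ is odd for $n\equiv 1,2\pmod 4$, disposing of half the cases immediately, is a nice reduction absent from the paper; and for the finite range the paper does not brute-force all of $4\le n\le N_0$ but uses single primes $17$, $101$, $1297$ (each dividing exactly one $k^2+1$ in a long stretch, hence appearing to the first power) to rule out most of the interval at once --- a trick worth borrowing to keep your computation light.
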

Suppose on the contrary that for $n>3$, $P_n$ is square. Then, we can write it as a product of some primes $p$, where $p<2n$, in the following form: \begin{align}
    P_n=\prod_{p<2n}p^{\alpha_p}, \text{ where }\alpha_p \in \mathbb{N}
\end{align} Note that \begin{align}
    P_n=\prod_{k=1}^n(k^2+1)>\prod_{k=1}^n{k^2}=(n!)^2.
\end{align} Consider writing $n!$ as \begin{align}
    n!=\prod_{p\le n}p^{\beta_p}
\end{align} since $n!$ is a product of all the natural numbers from 1 to itself inclusive, and there are some numbers which are composite and hence, can be written as the product of primes. This expression is apt when we compare it with the expression for $P_n$ in terms of $\alpha_p$ claimed in (10).
\newline
\newline We have the following as we make use of the property that the logarithm of a product is the sum of that logarithm:
	\begin{align}
   {{\left( \prod\limits_{p\le n}{{{p}^{{{\beta }_{p}}}}} \right)}^{2}}&<\prod\limits_{p<2n}{{{p}^{{{\alpha }_{p}}}}} \\ 
  2\log \left( \prod\limits_{p\le n}{{{p}^{{{\beta }_{p}}}}} \right) &<\log \left( \prod\limits_{p<2n}{{{p}^{{{\alpha }_{p}}}}} \right) \\ 
  \sum\limits_{p\le n}{{{\beta }_{p}}\log p} &<\frac{1}{2} \sum\limits_{p<2n}{{{\alpha }_{p}}\log p}  
\end{align}
\begin{corollary}
For $n \ge 1$, $\alpha_2=\lceil{n/2}\rceil$.
\end{corollary}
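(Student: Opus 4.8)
The plan is to compute $\alpha_2$ directly, since by definition $\alpha_2$ is the exponent of $2$ in the prime factorisation of $P_n$, i.e. $\alpha_2 = v_2(P_n) = \sum_{k=1}^{n} v_2(k^2+1)$, where $v_2$ denotes the $2$-adic valuation. So the whole task reduces to determining $v_2(k^2+1)$ for each $k$ and then counting the terms that contribute.

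First I would split the sum according to the parity of $k$. If $k$ is even then $k^2+1$ is odd, so $v_2(k^2+1)=0$ and such terms contribute nothing. If $k$ is odd, write $k = 2m+1$; then $k^2+1 = 4m(m+1)+2 = 2\bigl(2m(m+1)+1\bigr)$, and since $2m(m+1)+1$ is odd this gives $v_2(k^2+1) = 1$ exactly. (Equivalently: an odd square is $\equiv 1 \pmod 8$, so $k^2+1 \equiv 2 \pmod 8$.) Hence each odd $k$ in the range contributes exactly $1$ to $\alpha_2$, and each even $k$ contributes $0$.

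It remains to count the odd integers $k$ with $1 \le k \le n$. If $n = 2t$ these are $1, 3, \dots, 2t-1$, which is $t = n/2$ values; if $n = 2t+1$ these are $1, 3, \dots, 2t+1$, which is $t+1 = (n+1)/2$ values. In both cases the count equals $\lceil n/2 \rceil$, so $\alpha_2 = \lceil n/2 \rceil$, as claimed. (As a sanity check, for $n=3$ this gives $\alpha_2 = 2$, consistent with $P_3 = 2\cdot 5\cdot 10 = 100 = 10^2$.)

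There is no substantial obstacle in this argument; the only point that deserves care is the claim that $v_2(k^2+1)$ is \emph{exactly} $1$, not more, for odd $k$ — that is, the congruence $k^2+1 \equiv 2 \pmod 4$ — which is what ensures the sum telescopes into a plain count of odd indices.
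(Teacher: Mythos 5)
Your proof is correct and follows essentially the same route as the paper: split by parity of $k$, observe that $k^2+1\equiv 2\pmod 4$ when $k$ is odd (so the $2$-adic valuation is exactly $1$) and that $k^2+1$ is odd when $k$ is even, then count the odd indices up to $n$. If anything, your version states the key point — that the valuation is exactly $1$, never more — more explicitly than the paper's pairing argument does.
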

\begin{proof}
Since $k^2 \equiv 0\text{ or }1 \text{ }(\operatorname{mod}4)$, then $k^2+1\equiv 1 \text{ or }2 \text{ }(\operatorname{mod}4)$. The initial result can be easily derived by considering the cases where $k$ is odd or even. Note that $\alpha_2$ denotes the number of times $P_n$ can be divided by 2.
\newline
\newline If $k$ is odd, then $k^2+1$ is even, which is why for all odd $k$, $P_k$ contributes an additional multiplicity of 2 (other than odd multiples) as compared to $P_{k-1}$. Hence, for odd $k$, consider the pair $\left(P_k,P_{k+1}\right)$, where both their values of $\alpha_2$ are the same. As there are $n/2$ such pairs, then $\alpha_2=\lceil{n/2}\rceil$.
\end{proof}
Before we state and prove an important lemma (Lemma \ref{important lemma}), we define the Legendre symbol and state a major theorem known as the law of quadratic reciprocity.
\begin{definition}[Legendre symbol]
Let $p$ be an odd prime. An integer $a$ is a quadratic residue modulo $p$ if it is congruent to a perfect square modulo $p$ and is a quadratic non-residue modulo $p$ otherwise. The Legendre symbol is a function of $a$ and $p$ defined as \[\left( \frac{a}{p} \right)=\left\{ \begin{array}{*{35}{l}}
   1 & \text{if }a\text{ is a quadratic residue modulo }p\text{ and }a\not\equiv 0\text{ }\left( \operatorname{mod}p \right),  \\
   -1 & \text{if }a\text{ is a non-quadratic residue modulo }p,  \\
   0 & \text{if }a\equiv 0\text{ }\left( \operatorname{mod}p \right).  \\
\end{array} \right.\]
\end{definition}
\begin{theorem}[law of quadratic reciprocity]\label{law of quadratic reciprocity} For distinct odd primes $p$ and $q$, \begin{align}
    \left({\frac {p}{q}}\right)\left({\frac {q}{p}}\right)=(-1)^{{\frac {p-1}{2}}\cdot {\frac {q-1}{2}}}.
\end{align}
\end{theorem}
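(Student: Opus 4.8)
The plan is to prove the reciprocity law by the classical route through Gauss's lemma and Eisenstein's lattice-point counting argument, since it is entirely elementary and requires nothing beyond the Legendre symbol just defined. As a preliminary I would recall Euler's criterion, $a^{(p-1)/2}\equiv\left(\frac{a}{p}\right)\pmod p$ for $p\nmid a$, and then prove \emph{Gauss's lemma}: fixing an odd prime $p$ and an integer $a$ with $p\nmid a$, set $S=\{1,2,\ldots,(p-1)/2\}$ and for each $k\in S$ write $ka\equiv\varepsilon_k r_k\pmod p$ with $r_k\in S$ and $\varepsilon_k\in\{\pm 1\}$. A short argument shows that $k\mapsto r_k$ is a bijection of $S$, so multiplying the $(p-1)/2$ congruences gives $a^{(p-1)/2}\prod_k k\equiv\bigl(\prod_k\varepsilon_k\bigr)\prod_k k\pmod p$, and cancelling $\prod_k k$ together with Euler's criterion yields $\left(\frac{a}{p}\right)=(-1)^{\mu}$, where $\mu=\#\{k\in S:\varepsilon_k=-1\}$ counts the $k\in S$ whose least positive residue of $ka$ exceeds $p/2$.

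Next I would convert this into a floor-sum. For each $k$ write $ka=p\lfloor ka/p\rfloor+t_k$ with $0<t_k<p$; summing over $k\in S$, reducing modulo $2$, and using that the numbers $t_k$ are precisely $\{r_k:\varepsilon_k=1\}\cup\{p-r_k:\varepsilon_k=-1\}$, one obtains, whenever $a$ is odd,
\[
\mu\equiv\sum_{k\in S}\left\lfloor\frac{ka}{p}\right\rfloor\pmod 2,
\qquad\text{hence}\qquad
\left(\frac{a}{p}\right)=(-1)^{\sum_{k=1}^{(p-1)/2}\lfloor ka/p\rfloor}.
\]
Applying this to the two primes $p$ and $q$ in turn gives
\[
\left(\frac{q}{p}\right)=(-1)^{\sum_{k=1}^{(p-1)/2}\lfloor kq/p\rfloor},
\qquad
\left(\frac{p}{q}\right)=(-1)^{\sum_{j=1}^{(q-1)/2}\lfloor jp/q\rfloor}.
\]

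The decisive step is then Eisenstein's identity
\[
\sum_{k=1}^{(p-1)/2}\left\lfloor\frac{kq}{p}\right\rfloor
+\sum_{j=1}^{(q-1)/2}\left\lfloor\frac{jp}{q}\right\rfloor
=\frac{p-1}{2}\cdot\frac{q-1}{2},
\]
which I would obtain by a double count of the lattice points $(k,j)$ with $1\le k\le (p-1)/2$ and $1\le j\le (q-1)/2$: there are exactly $\frac{p-1}{2}\cdot\frac{q-1}{2}$ of them, and because $\gcd(p,q)=1$ the line $pj=qk$ passes through none of them (it would force $p\mid k$, impossible in this range). Splitting the points according to whether $pj<qk$ or $pj>qk$ and counting each region column by column reproduces exactly the two floor sums. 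Multiplying the two displayed formulas for $\left(\frac{q}{p}\right)$ and $\left(\frac{p}{q}\right)$ and substituting Eisenstein's identity gives $\left(\frac{p}{q}\right)\left(\frac{q}{p}\right)=(-1)^{\frac{p-1}{2}\cdot\frac{q-1}{2}}$, as claimed.

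I expect the main obstacle to be bookkeeping rather than any conceptual difficulty: the passage from Gauss's lemma to the floor-sum must track the least-residue decomposition $t_k=r_k$ or $t_k=p-r_k$ carefully so that the parity count comes out correctly, and one must verify that the line $pj=qk$ genuinely misses every lattice point of the open rectangle, so that the two regions actually partition the count. Once those two points are secured, the remainder is routine arithmetic.
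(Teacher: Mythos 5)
Your proposal is a correct and complete outline of the standard Gauss--Eisenstein proof of quadratic reciprocity: Euler's criterion, Gauss's lemma, the reduction of the sign count $\mu$ to the floor sum $\sum_{k}\lfloor ka/p\rfloor$ for odd $a$, and the lattice-point count establishing $\sum_{k=1}^{(p-1)/2}\lfloor kq/p\rfloor+\sum_{j=1}^{(q-1)/2}\lfloor jp/q\rfloor=\frac{p-1}{2}\cdot\frac{q-1}{2}$ all check out, including the two delicate points you flag (the parity bookkeeping in passing from $t_k$ to $\mu$, and the fact that $\gcd(p,q)=1$ keeps the line $pj=qk$ off the open rectangle). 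Note, however, that the paper itself offers no proof of this theorem at all --- it is stated as a known result and used as a black box --- so there is no argument in the text to compare yours against; you have supplied a proof where the paper supplies none. One further remark worth making: the place where the paper actually invokes ``quadratic reciprocity'' (Lemma \ref{important lemma}, deducing $p\equiv 1\pmod 4$ from $\left(\frac{-1}{p}\right)=1$) really uses the \emph{first supplement} $\left(\frac{-1}{p}\right)=(-1)^{(p-1)/2}$ rather than the reciprocity law between two odd primes; that supplement follows directly from Euler's criterion, which your outline already establishes as a preliminary, so your development in fact covers everything the paper needs.
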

\begin{lemma}\label{important lemma}
Let $p$ be an odd prime. If $p\text{ }|\text{ }(k^2+1)$ for $1 \le k \le n$, then $p\equiv 1 \text{ }(\operatorname{mod}4)$.
\end{lemma}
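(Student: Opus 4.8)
The plan is to reformulate the hypothesis as a statement about quadratic residues and then extract the congruence condition on $p$ from the structure of $(\mathbb{Z}/p\mathbb{Z})^\times$. If $p \mid (k^2+1)$, then $k^2 \equiv -1 \pmod p$; in particular $p \nmid k$, since otherwise $p \mid 1$. Thus $-1$ is a quadratic residue modulo $p$, i.e.\ $\left(\frac{-1}{p}\right) = 1$.

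First I would pass to the cyclic group $G = (\mathbb{Z}/p\mathbb{Z})^\times$, which has order $p-1$. Writing $\bar k$ for the class of $k$, the relation $\bar k^2 = -1$ together with $p$ odd (so that $-1 \ne 1$ in $G$) forces $\bar k^2 \ne 1$ but $\bar k^4 = 1$; hence the order of $\bar k$ is exactly $4$. By Lagrange's theorem the order of every element divides $|G| = p-1$, so $4 \mid p-1$, which is precisely $p \equiv 1 \pmod 4$.

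Alternatively, and perhaps more in keeping with the surrounding material on Legendre symbols, I would invoke Euler's criterion $\left(\frac{-1}{p}\right) \equiv (-1)^{(p-1)/2} \pmod p$. Since the left-hand side equals $1$, we get $(-1)^{(p-1)/2} \equiv 1 \pmod p$; as both sides lie in $\{1,-1\}$ and $p > 2$, this congruence is an equality, forcing $(p-1)/2$ to be even, i.e.\ $p \equiv 1 \pmod 4$. This is essentially the first supplement to the law of quadratic reciprocity accompanying Theorem \ref{law of quadratic reciprocity}.

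Neither route involves a genuine obstacle: the only point requiring care is the use of the oddness of $p$ --- in the group-theoretic argument to guarantee $\bar k$ does not have order dividing $2$, and in the Euler-criterion argument to upgrade a congruence modulo $p$ to an equality of signs. The converse direction (that $p \equiv 1 \pmod 4$ implies $k^2 \equiv -1$ has a solution) is not needed for the application, so I would not prove it here.
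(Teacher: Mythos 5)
Your proposal is correct, and your second route is essentially the paper's argument: reduce the hypothesis to $\left(\frac{-1}{p}\right)=1$ and conclude $p\equiv 1\ (\operatorname{mod}4)$. In fact your version is tighter than the paper's: the paper justifies the final step by appealing to the law of quadratic reciprocity (Theorem \ref{law of quadratic reciprocity}), but that theorem as stated concerns two \emph{distinct} odd primes $p$ and $q$ and does not by itself evaluate $\left(\frac{-1}{p}\right)$; the step really rests on the first supplement $\left(\frac{-1}{p}\right)=(-1)^{(p-1)/2}$, i.e.\ Euler's criterion, which is exactly what you invoke, together with your (correct) observation that a congruence between $\pm 1$ modulo an odd prime is an equality. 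Your first route --- noting that $p\nmid k$, that $\bar k$ has order exactly $4$ in $(\mathbb{Z}/p\mathbb{Z})^\times$ because $\bar k^2=-1\neq 1$ while $\bar k^4=1$, and applying Lagrange's theorem to get $4\mid p-1$ --- is a genuinely different and more elementary argument: it avoids the Legendre-symbol machinery entirely and makes the role of the oddness of $p$ completely transparent. Either route closes the small logical gap in the paper's own justification; the Lagrange argument is self-contained, while the Euler-criterion argument fits more naturally with the paper's surrounding development of quadratic residues.
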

\begin{proof}
As $k^2\equiv -1\text{ }(\operatorname{mod}p)$, $-1$ is a quadratic residue modulo $p$. Here, we set $a=-1$ and so \[\left(\frac{-1}{p}\right)=1.\] By the law of quadratic reciprocity (Theorem \ref{law of quadratic reciprocity}), $p \equiv 1 \text{ }(\operatorname{mod} 4)$.
\end{proof}
\begin{lemma}
For any prime $p$, $x^2\equiv 0\text{ }(\operatorname{mod}p)$ has the unique solution $x\equiv 0 \text{ }(\operatorname{mod}p)$.
\end{lemma}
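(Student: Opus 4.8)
The plan is to reduce the congruence to a divisibility statement and then invoke the primality of $p$ through Euclid's lemma. Suppose $x$ is an integer with $x^2\equiv 0\ (\operatorname{mod}p)$. By definition this means $p\mid x^2$, and since $x^2=x\cdot x$, we have $p\mid x\cdot x$. Because $p$ is prime, Euclid's lemma forces $p\mid x$, i.e. $x\equiv 0\ (\operatorname{mod}p)$. For the converse direction, if $x\equiv 0\ (\operatorname{mod}p)$ then trivially $x^2\equiv 0\ (\operatorname{mod}p)$, so $x\equiv 0$ is indeed a solution; combining the two directions shows it is the only one.

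I would present this in two short sentences, first establishing that any solution must satisfy $x\equiv 0$, then remarking that $x\equiv 0$ does solve the congruence, which together give uniqueness. It may be worth a parenthetical remark that primality is essential here: for a composite modulus such as $p=4$ the congruence $x^2\equiv 0\ (\operatorname{mod}4)$ has the extra solution $x\equiv 2$, so the argument genuinely uses that $p$ has no nontrivial factorisation. This also explains why the lemma is stated separately rather than folded into a general statement about prime powers, which is presumably what the surrounding analysis will need.

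There is no real obstacle: the only step with any content is the appeal to Euclid's lemma (equivalently, the fact that $\mathbb{Z}/p\mathbb{Z}$ is an integral domain), which is standard and may be cited without proof. The writing task is simply to state it cleanly enough that later sections can quote it when counting multiplicities of primes dividing $P_n$.
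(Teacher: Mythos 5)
Your proposal is correct and matches the paper's argument: the appeal to Euclid's lemma is exactly the paper's observation that $\mathbb{Z}/p\mathbb{Z}$ has no nonzero zero divisors, so a product vanishing modulo $p$ forces a factor to vanish. The added remark about the failure for composite moduli is a nice touch but the underlying proof is the same.
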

\begin{proof}
The only zero divisor in the ring $\mathbb{Z}/p\mathbb{Z}$ is 0. As such, if a product is 0, one of the factors must be 0 as well, and the result follows.
\end{proof}
\begin{theorem}\label{important theorem pj}
For any prime $p$, each interval of length $p^j$ contains two solutions to the congruence $x^2\equiv 1 \text{ }\left(\operatorname{mod}p^j\right)$ \cite{virginia}.
\end{theorem}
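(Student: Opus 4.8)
The plan is to reduce the claim to a count of residue classes. Any interval of length $p^j$ — that is, any block of $p^j$ consecutive integers — contains exactly one representative of each residue class modulo $p^j$, so the number of integers in it satisfying $x^2\equiv 1\ (\operatorname{mod}p^j)$ equals the number of residue classes modulo $p^j$ that solve the congruence. Hence it suffices to prove that $x^2\equiv 1\ (\operatorname{mod}p^j)$ has exactly two solutions in $\mathbb{Z}/p^j\mathbb{Z}$, namely $x\equiv\pm1$.

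For the core argument I would rewrite the congruence as $p^j\mid(x-1)(x+1)$. Since $(x-1)$ and $(x+1)$ differ by $2$, any common divisor of them divides $2$; in particular, for an odd prime $p$ the prime $p$ cannot divide both $x-1$ and $x+1$. By unique factorization all $j$ copies of $p$ therefore lie in the same factor, so $p^j\mid x-1$ or $p^j\mid x+1$, i.e. $x\equiv 1$ or $x\equiv-1\ (\operatorname{mod}p^j)$. Conversely both of these satisfy the congruence, and they are distinct modulo $p^j$ as soon as $p^j>2$, which holds for every odd prime power; this gives exactly two solutions. (Equivalently, one can note that $(\mathbb{Z}/p^j\mathbb{Z})^{\times}$ is cyclic of even order for odd $p$, hence has a unique element of order $2$, which with the identity accounts for the two square roots of $1$; or one can induct on $j$, lifting the two roots modulo $p$ uniquely by Hensel's lemma, since the derivative $2x$ is a unit at $x\equiv\pm1$.)

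The one delicate point — and the main obstacle to a literal reading of the statement — is the prime $p=2$: there $\gcd(x-1,x+1)=2$ for odd $x$, so the step "all powers of $p$ go into one factor" fails, and in fact $x^2\equiv 1\ (\operatorname{mod}2^j)$ has one solution for $j=1$, two for $j=2$, and four for $j\ge 3$. I would therefore state and apply the theorem for odd primes $p$, which is precisely the range we need: by Lemma \ref{important lemma} every prime $p$ dividing some $k^2+1$ with $1\le k\le n$ is odd (indeed $p\equiv 1\ (\operatorname{mod}4)$), while the contribution of the prime $2$ to $P_n$ has already been determined exactly (the value $\alpha_2=\lceil n/2\rceil$). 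So the $p=2$ anomaly is harmless downstream, and the remainder of the proof is entirely routine.
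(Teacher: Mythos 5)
Your proof is correct (for odd $p$) but takes a genuinely different route from the paper. The paper proceeds by induction on $j$ with an explicit Hensel-type lift: given $x_0$ with $x_0^2\equiv 1\ (\operatorname{mod}p^j)$ it sets $x_1=x_0+p^jy_0$ and solves a linear congruence for $y_0$, using $\gcd(2x_0,p)=1$; the base case and the count of exactly two roots are attributed to the law of quadratic reciprocity. You instead factor $x^2-1=(x-1)(x+1)$, observe that an odd $p$ cannot divide both factors, and conclude in one step that the only roots modulo $p^j$ are $\pm 1$, then convert this to the ``interval of length $p^j$'' phrasing by the standard observation that such an interval meets each residue class exactly once (a reduction the paper leaves implicit). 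Your argument is shorter, avoids the misplaced appeal to quadratic reciprocity (the two-roots-mod-$p$ fact is really Lagrange's theorem on polynomial roots over a field, or Euler's criterion), and — importantly — you correctly flag that the theorem as literally stated is false for $p=2$, $j\ge 3$, where $x^2\equiv 1\ (\operatorname{mod}2^j)$ has four solutions; the paper's own proof silently assumes $p$ odd at the step $\gcd(2x_0,p)=1$, and your remark that the $p=2$ case is handled separately via $\alpha_2=\lceil n/2\rceil$ and Lemma \ref{important lemma} repairs this cleanly. The one advantage of the paper's lifting approach is that it generalizes verbatim to $x^2\equiv a\ (\operatorname{mod}p^j)$ for any unit $a$ — which is what is actually invoked later for $x^2\equiv -1\ (\operatorname{mod}p^j)$ in Lemma \ref{alpha beta first bounds lemma} — whereas your factorization trick is specific to $a=1$; your parenthetical Hensel/cyclic-group alternatives would cover that general case as well.
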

\begin{proof}
We proceed by induction. First, we solve the base case $x^2\equiv 1\text{ }(\operatorname{mod}p)$. Then, our induction hypothesis would be for $j>1$, given a solution to $x^2\equiv 1\text{ }\left(\operatorname{mod}p^j\right)$, we find a solution to $x^2\equiv 1\text{ }\left(\operatorname{mod}p^{j+1}\right)$. The existence of the two and only two solutions, say $x_1$ and $-x_1$, is a consequence of the law of quadratic reciprocity. For this to occur, we must have \begin{align}
    \left(\frac{1}{p}\right)=1.
\end{align}
The base case is an immediate consequence of the law of quadratic reciprocity too.
\newline
\newline Assume that there exists a solution $x_0$ such that $x_0^2\equiv 1 \text{ }\left(\operatorname{mod}p^j\right)$. We wish to find a lift of $x_0 \text{ }\left(\operatorname{mod}p^j\right)$ to $x_1\text{ }\left(\operatorname{mod}p^{j+1}\right)$ that satisfies $x_1^2\equiv 1\text{ }\left(\operatorname{mod}p^{j+1}\right)$. To ensure that $x_1\text{ }\left(\operatorname{mod}p^{j+1}\right)$ is indeed a lift of $x_0 \text{ }\left(\operatorname{mod}p^j\right)$, consider \begin{align}
    x_1=x_0+p^jy_0.
\end{align} Squaring both sides, \begin{align}
 x_1^2&=x_0^2+2p^jx_0y_0+p^{j+1}y_0   \\
 &\equiv x_0^2+2p^jx_0y_0 \text{ }\left(\operatorname{mod}p^{j+1}\right)\\
 x_0^2+2p^jx_0y_0 &\equiv 1 \text{ }\left(\operatorname{mod}p^{j+1}\right)
\end{align}
Since $x_0^2-1$ is a multiple of $p^j$, setting $x_0^2-1=\lambda p^j$, (21) implies $\lambda p^j+2p^jx_0y_0$ is a multiple of $p^{j+1}$. Since $\operatorname{gcd}(2x_0,p)=1$, then $\operatorname{gcd}(2p^jx_0,p^{j+1})=p^j$ and hence, $\lambda+2x_0y_0$ is a multiple of $p$. As such, there exists a unique solution to \begin{align}
    2x_0y_0\equiv \frac{1-x_0^2}{p^j}\text{ }(\operatorname{mod}p)
\end{align} and so \begin{align}
    y_0\equiv\frac{1-x_0^2}{2p^jx_0}\text{ }(\operatorname{mod}p).
\end{align}
The existence of the two solutions follows by the law of quadratic reciprocity.
\end{proof} 
\begin{lemma}\label{alpha beta first bounds lemma}
If $p\equiv 1\text{ }(\operatorname{mod}4)$, then \begin{align}
    \frac{1}{2}{{\alpha }_{p}}-{{\beta }_{p}}\le \sum\limits_{j\le \log n/\log p}{\left( \left\lceil \frac{n}{{{p}^{j}}} \right\rceil -\left\lfloor \frac{n}{{{p}^{j}}} \right\rfloor  \right)}+\sum\limits_{\log n/\log p<j\le \log \left( {{n}^{2}}+1 \right)/p}{\left\lceil \frac{n}{{{p}^{j}}} \right\rceil }.
\end{align}
\end{lemma}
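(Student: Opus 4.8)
The plan is to bound $\alpha_p$ from above by a counting argument, evaluate $\beta_p$ exactly by Legendre's formula, and then compare the two sums term by term. For the numerator, introduce for each $j\ge 1$ the count
\[
N_j=\#\{\,1\le k\le n:\ p^j\mid k^2+1\,\},
\]
and note that $\alpha_p=\sum_{j\ge 1}N_j$: indeed $v_p(k^2+1)$ is the number of $j\ge 1$ with $p^j\mid k^2+1$, so summing over $1\le k\le n$ and interchanging the order of summation yields the claim. Since $p^j\mid k^2+1$ with $1\le k\le n$ forces $p^j\le n^2+1$, we have $N_j=0$ once $j>\log(n^2+1)/\log p$, so this sum is finite. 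For the denominator I would simply quote Legendre's formula $\beta_p=\sum_{j\ge 1}\lfloor n/p^j\rfloor$, whose terms vanish once $j>\log n/\log p$.

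The crux is the estimate $N_j\le 2\lceil n/p^j\rceil$, and here I would run the Hensel-lifting argument of Theorem \ref{important theorem pj} with $x^2+1$ in place of $x^2-1$. Because $p\equiv 1\pmod 4$ we have $\left(\tfrac{-1}{p}\right)=(-1)^{(p-1)/2}=1$, so $x^2\equiv -1\pmod p$ has a solution $x_0$, necessarily with $x_0\not\equiv 0\pmod p$; as $p$ is odd, $2x_0$ is a unit modulo $p$, so the solution lifts uniquely from $p^j$ to $p^{j+1}$ exactly as in the proof of Theorem \ref{important theorem pj}. Together with uniqueness of the square root up to sign — if $x_1^2\equiv x_2^2\pmod{p^j}$ then $p^j\mid(x_1-x_2)(x_1+x_2)$ and $p$ divides at most one of the two factors — this shows that $x^2\equiv -1\pmod{p^j}$ has exactly two residue classes of solutions. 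Hence any block of $p^j$ consecutive integers contains exactly two such $k$, and $\{1,\dots,n\}$ is covered by $\lceil n/p^j\rceil$ such blocks, giving $N_j\le 2\lceil n/p^j\rceil$.

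Putting the pieces together, $\tfrac12\alpha_p=\tfrac12\sum_{j\ge 1}N_j\le\sum_{1\le j\le\log(n^2+1)/\log p}\lceil n/p^j\rceil$, while $\beta_p=\sum_{1\le j\le\log n/\log p}\lfloor n/p^j\rfloor$. Subtracting these and splitting the first sum at $j=\log n/\log p$, so that over the common range $1\le j\le\log n/\log p$ the ceiling and floor terms pair up, gives exactly
\[
\tfrac12\alpha_p-\beta_p\le\sum_{j\le\log n/\log p}\left(\left\lceil\frac{n}{p^j}\right\rceil-\left\lfloor\frac{n}{p^j}\right\rfloor\right)+\sum_{\log n/\log p<j\le\log(n^2+1)/\log p}\left\lceil\frac{n}{p^j}\right\rceil,
\]
which is the claimed inequality. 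The only delicate point is the boundary index: if $\log n/\log p$ happens to be an integer then $p^j=n$ there and $\lceil n/p^j\rceil-\lfloor n/p^j\rfloor=0$, so it is harmless to place that index in either sum. I expect the main obstacle to be precisely the solution count for $x^2\equiv -1\pmod{p^j}$ — transferring the argument of Theorem \ref{important theorem pj} and checking that there are \emph{exactly} two classes — since everything after that is routine bookkeeping with floors and ceilings.
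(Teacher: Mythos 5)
Your proof is correct and follows essentially the same route as the paper: bound the number of $k\le n$ with $p^j\mid(k^2+1)$ by $2\lceil n/p^j\rceil$ via the two residue classes of solutions to $x^2\equiv-1\ (\operatorname{mod}p^j)$, take Legendre's formula for $\beta_p$, and split the resulting sum at $j=\log n/\log p$. You are in fact more careful than the paper at the one delicate point: the paper invokes Theorem \ref{important theorem pj} (stated for $x^2\equiv 1\ (\operatorname{mod}p^j)$) without comment, whereas you rerun the lifting argument for $x^2\equiv-1\ (\operatorname{mod}p^j)$, which is precisely where the hypothesis $p\equiv 1\ (\operatorname{mod}4)$ enters.
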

\begin{proof}
Let $S_1$ and $S_2$ denote the following sets: \begin{align}
    S_1=\left\{1\le k\le n,\text{ }p\text{ prime}: p^j\mid (k^2+1)\right\} \text{ and }S_2=\left\{1\le k\le n,\text{ }p\text{ prime}:p^j\mid k\right\}
\end{align}
Then, using Theorem \ref{important theorem pj}, we can find an upper bound for $\alpha_p$ and an equation for $\beta_p$. That is, \begin{align}
    \alpha_p=\sum_{j\le \log(n^2+1)/\log p}|S_1| \le \sum_{j\le \log(n^2+1)/\log p} 2\lceil{n/p^j}\rceil
\end{align} and \begin{align}
    \beta_p=\sum_{j\le \log n/\log p}|S_2| = \sum_{j\le \log n/\log p}\left\lfloor{\frac{n}{p^j}}\right\rfloor.
\end{align}
Hence, \begin{align}
   \frac{1}{2}{{\alpha }_{p}}-{{\beta }_{p}}&\le \sum\limits_{j\le \log ({{n}^{2}}+1)/\log p}{\left\lceil {\frac{n}{p^j}}  \right\rceil }-\sum\limits_{j\le \log n/\log p}{\left\lfloor {\frac{n}{p^j}} \right\rfloor } \\ 
 & =\sum\limits_{j\le \log n/\log p}{\left\lceil {\frac{n}{p^j}} \right\rceil }+\sum\limits_{\log n/\log p\le j\le \log ({{n}^{2}}+1)/\log p}{\left\lceil {\frac{n}{p^j}}  \right\rceil }-\sum\limits_{j\le \log n/\log p}{\left\lfloor {\frac{n}{p^j}} \right\rfloor } \\ 
 & =\sum\limits_{j\le \log n/\log p}{\left( \left\lceil {\frac{n}{p^j}}  \right\rceil -\left\lfloor {\frac{n}{p^j}}  \right\rfloor  \right)}+\sum\limits_{\log n/\log p\le j\le \log ({{n}^{2}}+1)/\log p}{\left\lceil {\frac{n}{p^j}}  \right\rceil }  
\end{align} and the result follows.
\end{proof}
We observe a nice property related to the floor and ceiling function (Lemma \ref{floor and ceiling function lemma}) before stating an important theorem (Theorem \ref{second important theorem}).
\begin{lemma}\label{floor and ceiling function lemma}
Let $C=\lceil{x}\rceil-\lfloor{x}\rfloor$. If $x\in\mathbb{Z}$, $C=0$, otherwise, $C=1$.
\end{lemma}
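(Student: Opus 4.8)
The plan is to split into the two cases dictated by the statement and to verify each one directly from the defining properties of the floor and ceiling functions. Recall that $\lfloor x\rfloor$ is the greatest integer not exceeding $x$ and $\lceil x\rceil$ is the least integer that is at least $x$, so that the chain $\lfloor x\rfloor \le x \le \lceil x\rceil$ holds for every real $x$. The whole argument is then just a short case analysis.

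First I would treat the case $x\in\mathbb{Z}$. Here $x$ is itself an integer satisfying both $x \le x$ and $x \ge x$, so by the maximality in the definition of the floor and the minimality in the definition of the ceiling we get $\lfloor x\rfloor = x = \lceil x\rceil$. Hence $C = x - x = 0$, as claimed.

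Next, suppose $x\notin\mathbb{Z}$. Put $m = \lfloor x\rfloor$, so that $m \le x$, and in fact $m < x$ since equality would force $x$ to be an integer; on the other hand $x < m+1$, for otherwise $m+1$ would be an integer $\le x$ contradicting the maximality of $m$. Thus $m+1$ is an integer with $m+1 > x$, and since no integer lies strictly between $m$ and $m+1$, minimality gives $\lceil x\rceil = m+1$. Therefore $C = (m+1) - m = 1$.

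Since the two cases $x\in\mathbb{Z}$ and $x\notin\mathbb{Z}$ are exhaustive, the lemma follows. There is no real obstacle here; the only point that requires a moment's care is invoking the strictness of the inequality $m < x$ in the non-integer case, since it is precisely that strictness which pushes the ceiling up to $m+1$ instead of leaving it equal to $m$.
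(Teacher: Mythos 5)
Your proof is correct and follows the same basic route as the paper: a case split on whether $x$ is an integer. The only difference is in the non-integer case, where you directly identify $\lceil x\rceil = \lfloor x\rfloor + 1$, whereas the paper sandwiches $C$ in $(-1,1]$, invokes integrality to get $C\in\{0,1\}$, and then rules out $C=0$; your version is slightly more direct but not a genuinely different argument.
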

\begin{proof}
If $x\in\mathbb{Z}$, $\lceil{x}\rceil=\lfloor{x}\rfloor=x$, and so $C=0$. Suppose $x\in\mathbb{R}\backslash\mathbb{Z}$. By definition, $m < \lceil{x}\rceil \le m+1$ and $m \le \lfloor{x}\rceil < m+1$, where $m\in\mathbb{Z}$. Hence, $-m-1<-\lceil{x}\rceil \le -m$, and so $-1< C \le 1$. Since $C$ is an integer, $0 \le C \le 1$. However, $\lfloor{x}\rfloor\ne \lceil{x}\rceil$, which asserts that $C=1$. 
\end{proof}
\begin{theorem}\label{second important theorem}
If $p \equiv 1 \text{ }(\operatorname{mod}4)$, then \begin{align}
    \frac{1}{2}{{\alpha }_{p}}-{{\beta }_{p}}\le \frac{\log \left( {{n}^{2}}+1 \right)}{\log p}.
\end{align}
\end{theorem}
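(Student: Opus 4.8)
The plan is to substitute the bound from Lemma \ref{alpha beta first bounds lemma} into the two elementary facts recorded in Lemma \ref{floor and ceiling function lemma}, and then simply count indices. By Lemma \ref{alpha beta first bounds lemma},
\[
\frac{1}{2}\alpha_p-\beta_p \le \sum_{j\le \log n/\log p}\Bigl(\bigl\lceil n/p^j\bigr\rceil-\bigl\lfloor n/p^j\bigr\rfloor\Bigr) + \sum_{\log n/\log p<j\le \log(n^2+1)/\log p}\bigl\lceil n/p^j\bigr\rceil,
\]
where in both sums $j$ ranges over positive integers. The first step is to bound the first sum from above: Lemma \ref{floor and ceiling function lemma} gives $\lceil x\rceil-\lfloor x\rfloor\le 1$ for every real $x$, so the first sum is at most the number of positive integers $j$ with $j\le \log n/\log p$, that is, at most $\lfloor \log n/\log p\rfloor$.

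Next I would dispose of the second sum. If $j>\log n/\log p$ then $p^j>n$, and since $n\ge 1$ this forces $0<n/p^j<1$, hence $\lceil n/p^j\rceil=1$. Therefore every term of the second sum equals exactly $1$, and the sum equals the number of positive integers $j$ lying in the interval $(\log n/\log p,\ \log(n^2+1)/\log p]$. Finally, the two index ranges $\{1\le j\le \log n/\log p\}$ and $\{\log n/\log p<j\le \log(n^2+1)/\log p\}$ are disjoint and their union is contained in $\{1\le j\le \log(n^2+1)/\log p\}$; adding the two bounds shows that $\tfrac12\alpha_p-\beta_p$ is at most the number of positive integers $j$ with $j\le \log(n^2+1)/\log p$, which is $\lfloor \log(n^2+1)/\log p\rfloor \le \log(n^2+1)/\log p$, as claimed.

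I do not expect a genuine obstacle here: the argument is a clean counting estimate once Lemma \ref{alpha beta first bounds lemma} is in hand. The only point deserving a moment's care is the boundary index near $j=\lfloor \log n/\log p\rfloor$, but since the first range uses the weak inequality $j\le \log n/\log p$ and the second uses the strict inequality $j>\log n/\log p$, the two ranges are genuinely disjoint and nothing is double-counted. It is also worth noting that the hypothesis $p\equiv 1\ (\operatorname{mod}4)$ enters this theorem only through Lemma \ref{alpha beta first bounds lemma}, so no fresh appeal to quadratic reciprocity is needed at this stage.
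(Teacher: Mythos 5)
Your argument is correct and follows essentially the same route as the paper: apply Lemma \ref{alpha beta first bounds lemma}, bound each summand by $1$ via Lemma \ref{floor and ceiling function lemma} (and the observation that $\lceil n/p^j\rceil\le 1$ once $p^j>n$), and then count the indices $j\le \log(n^2+1)/\log p$. Your explicit care about the disjointness of the two index ranges is a slight tightening of the paper's presentation (which writes the second range with a weak inequality at the left endpoint), but the substance is identical.
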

\begin{proof}
The proof hinges on Lemma \ref{alpha beta first bounds lemma} and Lemma \ref{floor and ceiling function lemma}. Note that $\lceil{n/p^j}\rceil-\lfloor{n/p^j}\rfloor\le 1\text{ }$ for all $j \le \log n/\log p$. Hence, \begin{align}
    \sum\limits_{j\le \log n/\log p}{\left( \left\lceil {\frac{n}{p^j}}  \right\rceil -\left\lfloor {\frac{n}{p^j}}  \right\rfloor  \right)} \le \sum\limits_{j\le \log n/\log p}.
\end{align} It is also clear that for all $\log n/\log p \le j \le \log(n^2+1)/\log p$, $\lceil{n/p^j}\rceil \le 1$. As such, \begin{align}
    \sum\limits_{\log n/\log p\le j\le \log ({{n}^{2}}+1)/\log p}{\left\lceil \frac{n}{p^j} \right\rceil } \le \sum\limits_{\log n/\log p\le j\le \log ({{n}^{2}}+1)/\log p}.
\end{align} Combining both inequalities, we have \begin{align}
\frac{1}{2}{{\alpha }_{p}}-{{\beta }_{p}}&\le \sum\limits_{j\le \log n/\log p}+\sum\limits_{\log n/\log p\le j\le \log ({{n}^{2}}+1)/\log p} \\&= \sum_{1 \le j \le \log(n^2+1)/\log p} \\&\le \frac{\log \left( {{n}^{2}}+1 \right)}{\log p}    
\end{align} and we are done.
\end{proof}
Returning to the inequality established in (15), if we can establish that $P_n$ is never a perfect square for $n$ greater than or equal to some number, say $\lambda$, then we are done. Consequently, we just need to verify the initial claim for $1\le n \le \lambda-1$.
\section{Some Results in Analytic Number Theory}
We shall make use of two well-known ideas, which are namely the prime-counting function and Bertrand's postulate (Theorem \ref{bertrand's postulate}).
\begin{definition}[prime counting function]
Let $\pi(x)$ denote the number of primes $p$ less than or equal to $x$.
\end{definition}
\begin{theorem}[prime number theorem]
A well-known asymptotic relation for $\pi(n)$ is as follows: for large $n$, \begin{align}
    \pi(n) \sim \frac{n}{\log n}
\end{align}
\end{theorem}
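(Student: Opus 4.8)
The statement above is the Prime Number Theorem, and the plan is to prove it by the complex-analytic route, working with the Chebyshev function $\psi(x)=\sum_{n\le x}\Lambda(n)$, where $\Lambda$ is the von Mangoldt function (equal to $\log p$ on powers of a prime $p$ and $0$ elsewhere). The first step is the elementary reduction: by Abel summation one shows that $\psi(x)\sim x$ is equivalent to $\vartheta(x)=\sum_{p\le x}\log p\sim x$, and in turn to $\pi(x)\sim x/\log x$. This uses only the crude Chebyshev-type bound $\psi(x)=O(x)$, itself provable from the $2$-adic valuation of $\binom{2m}{m}$, together with $\psi(x)-\vartheta(x)=O(\sqrt{x}\,\log x)$, and it is routine.

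The analytic heart of the argument concerns the Riemann zeta function $\zeta(s)=\sum_{n\ge1}n^{-s}$, defined first for $\operatorname{Re}(s)>1$. I would begin by showing that $\zeta$ extends meromorphically to $\operatorname{Re}(s)>0$ with a single simple pole at $s=1$ of residue $1$; this follows from the representation $\zeta(s)=\frac{s}{s-1}-s\int_1^\infty\{x\}\,x^{-s-1}\,dx$, valid first for $\operatorname{Re}(s)>1$ by Abel summation and then for $\operatorname{Re}(s)>0$ because the integral converges there, the pole arising solely from the term $s/(s-1)$. Taking the logarithmic derivative of the Euler product gives $-\zeta'(s)/\zeta(s)=\sum_{n\ge1}\Lambda(n)n^{-s}$ for $\operatorname{Re}(s)>1$. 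The crucial step is to establish $\zeta(s)\ne0$ on the line $\operatorname{Re}(s)=1$; the standard device is the trigonometric inequality $3+4\cos\theta+\cos2\theta=2(1+\cos\theta)^2\ge0$, applied to $\log\lvert\zeta(\sigma)^3\,\zeta(\sigma+it)^4\,\zeta(\sigma+2it)\rvert$ as $\sigma\downarrow1$, which forces a contradiction between a hypothetical zero at $1+it$ and the pole at $s=1$.

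It then remains to convert the analytic behaviour near the line $\operatorname{Re}(s)=1$ into the asymptotic $\psi(x)\sim x$. For this I would invoke Newman's Tauberian theorem: if $f$ is bounded and locally integrable on $[0,\infty)$ and its Laplace transform $g(z)=\int_0^\infty f(t)e^{-zt}\,dt$ extends holomorphically to an open neighbourhood of the closed half-plane $\{\operatorname{Re}(z)\ge0\}$, then $\int_0^\infty f(t)\,dt$ converges to $g(0)$. Applying this to $f(t)=\psi(e^t)e^{-t}-1$ --- which is bounded since $\psi(x)=O(x)$, and whose transform is holomorphic on $\operatorname{Re}(z)\ge0$ precisely because of the simple pole of $\zeta$ at $s=1$ and the non-vanishing on $\operatorname{Re}(s)=1$ --- yields convergence of $\int_1^\infty\frac{\psi(x)-x}{x^2}\,dx$, from which, $\psi$ being non-decreasing, a short squeezing argument gives $\psi(x)\sim x$ and hence $\pi(x)\sim x/\log x$.

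The main obstacle is, without question, the non-vanishing of $\zeta$ on the line $\operatorname{Re}(s)=1$: everything else is formal manipulation of Dirichlet series, elementary real-variable estimates, or the brief contour-integration argument behind Newman's theorem. A fully classical alternative would replace Newman's theorem by the de la Vall\'{e}e Poussin method, namely integrating a truncated Perron formula for $\psi(x)$ along a contour pushed slightly to the left of $\operatorname{Re}(s)=1$ inside an explicit zero-free region, but this requires quantitative control of $\zeta'/\zeta$ and is considerably more laborious, so I would favour Newman's route. Since the result is classical, it is quoted in the paper rather than reproved, the sketch above being the proof I would give if required.
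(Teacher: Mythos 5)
The paper offers no proof of this statement at all --- the Prime Number Theorem is quoted as a classical fact, and indeed the only consequence the paper actually uses downstream is the one-sided Chebyshev-type bound $\pi(n)<(1+\varepsilon)\,n/\log n$ for large $n$ (attributed to the Erd\H{o}s--Bertrand circle of ideas), which is far weaker than the full asymptotic and admits an elementary proof via the prime factorisation of $\binom{2m}{m}$. Your sketch is a correct outline of the standard Newman--Zagier proof: the reduction $\pi(x)\sim x/\log x\iff\vartheta(x)\sim x\iff\psi(x)\sim x$ via Abel summation and $\psi(x)-\vartheta(x)=O(\sqrt{x}\log x)$ is right; the continuation $\zeta(s)=\frac{s}{s-1}-s\int_1^\infty\{x\}x^{-s-1}\,dx$ to $\operatorname{Re}(s)>0$ is right; the identity $3+4\cos\theta+\cos2\theta=2(1+\cos\theta)^2$ and its use to rule out zeros on $\operatorname{Re}(s)=1$ is the correct key step; and the application of Newman's Tauberian theorem to $f(t)=\psi(e^t)e^{-t}-1$, whose Laplace transform $-\zeta'(z+1)/\bigl((z+1)\zeta(z+1)\bigr)-1/z$ is holomorphic on $\operatorname{Re}(z)\ge0$ precisely because the pole of $-\zeta'/\zeta$ at $s=1$ cancels the $1/z$ term, does yield convergence of $\int_1^\infty(\psi(x)-x)x^{-2}\,dx$ and hence, by monotonicity of $\psi$, the asymptotic $\psi(x)\sim x$. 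As a sketch it is accurate and correctly identifies the non-vanishing of $\zeta$ on the line $\operatorname{Re}(s)=1$ as the essential analytic input; the only remark worth adding is that, for the purposes of this paper, you could substitute the much more elementary Chebyshev upper bound and avoid complex analysis entirely.
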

\begin{theorem}[Bertrand's postulate]\label{bertrand's postulate}
For every $n>1$, there exists at least one prime $p$ such that $n<p<2n$.
\end{theorem}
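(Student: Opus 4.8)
The plan is to present the classical elementary proof due to Erd\H{o}s, which rests entirely on size estimates for the central binomial coefficient $\binom{2n}{n}$ and uses nothing beyond the prime factorizations of factorials. The two basic estimates are: a lower bound $\binom{2n}{n}\ge 4^n/(2n+1)$, valid because $\binom{2n}{n}$ is the largest of the $2n+1$ terms summing to $4^n=(1+1)^{2n}$; and an upper bound obtained by controlling, for each prime $p$, the exponent $R(p)$ to which $p$ divides $\binom{2n}{n}$. By Legendre's formula $R(p)=\sum_{j\ge 1}\bigl(\lfloor 2n/p^j\rfloor-2\lfloor n/p^j\rfloor\bigr)$; each summand lies in $\{0,1\}$ by an elementary floor-function identity in the spirit of Lemma \ref{floor and ceiling function lemma}, and every summand with $p^j>2n$ vanishes, so $p^{R(p)}\le 2n$ for all $p$. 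Two refinements then drop out: if $\sqrt{2n}<p\le 2n$ then $R(p)\le 1$, and if $\tfrac{2n}{3}<p\le n$ (with $n\ge 3$, so $p\ge 3$) then $R(p)=0$, since such a $p$ contributes the multiples $p$ and $2p$ but no higher power to $(2n)!$ and exactly the multiple $p$ to $n!$, giving $R(p)=2-2=0$.

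The third ingredient is the Chebyshev-type primorial bound $\prod_{p\le x}p<4^x$, which I would prove by induction on $\lfloor x\rfloor$, the step at an odd integer $2m+1$ using that $\binom{2m+1}{m}$ divides $\prod_{m+1<p\le 2m+1}p$ and satisfies $\binom{2m+1}{m}\le 4^m$ (two equal largest terms in $(1+1)^{2m+1}$). Now suppose for contradiction that for some $n>1$ there is no prime in $(n,2n)$. Then every prime factor of $\binom{2n}{n}$ is at most $n$, and by the vanishing statement it is in fact at most $\tfrac{2n}{3}$. Splitting these primes into those $\le\sqrt{2n}$ — at most $\sqrt{2n}$ of them, each contributing a factor at most $2n$ — and those in $(\sqrt{2n},\tfrac{2n}{3}]$ — each contributing only its first power — we get $\binom{2n}{n}\le (2n)^{\sqrt{2n}}\prod_{p\le 2n/3}p<(2n)^{\sqrt{2n}}\,4^{2n/3}$. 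Combined with the lower bound this yields $4^{n/3}<(2n+1)(2n)^{\sqrt{2n}}\le (2n)^{\sqrt{2n}+2}$, and taking logarithms, $\tfrac13 n\log 4<(\sqrt{2n}+2)\log(2n)$.

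To finish, I observe that the last inequality is false once $n$ exceeds an explicit bound (a routine comparison, since the left side grows linearly in $n$ while the right side grows like $\sqrt{n}\log n$), which supplies the contradiction for all large $n$. For the remaining finitely many $n$ one verifies Bertrand's postulate directly by exhibiting a chain of primes $2,3,5,7,13,23,43,83,163,317,631,\ldots$ in which each term is less than twice its predecessor, extended past the threshold, so that every small $n$ has some member of the chain lying in $(n,2n)$. I expect the only real friction to be bookkeeping: pinning down the two refined divisibility facts cleanly, choosing the cut-off $\sqrt{2n}$ and the numerical threshold for $n$ so that the elementary inequality genuinely closes, and keeping the constants in the primorial bound and in $\binom{2n}{n}\ge 4^n/(2n+1)$ consistent throughout. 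Conceptually there is no obstacle, and the argument is deliberately self-contained, invoking neither the prime number theorem nor the asymptotics stated above.
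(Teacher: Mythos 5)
The paper does not actually prove this theorem: Bertrand's postulate is stated as a known classical result and used as a black box (the bibliography points to Galvin's exposition of Erd\H{o}s's proof), so there is no in-paper argument to compare yours against. Your sketch is precisely that classical Erd\H{o}s argument, and it is sound: the lower bound $\binom{2n}{n}\ge 4^n/(2n+1)$, the bound $p^{R(p)}\le 2n$ from Legendre's formula, the vanishing of $R(p)$ for $2n/3<p\le n$ (your remark that $p\ge 3$ is what gives $p^2>(2n/3)p\ge 2n$ is the right justification), the primorial bound $\prod_{p\le x}p<4^x$ by induction through $\binom{2m+1}{m}\le 4^m$, and the Landau chain $2,3,5,7,13,23,\ldots$ for the finitely many small $n$ are all standard and fit together as you describe. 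The only items left implicit are the explicit numerical threshold at which $\tfrac13 n\log 4>(\sqrt{2n}+2)\log(2n)$ and the verification that the prime chain covers all $n$ below it; these are the routine bookkeeping you already flag, not gaps in the idea. Note that your proof is deliberately more elementary than the surrounding machinery of the paper, which elsewhere invokes the asymptotic $\pi(n)<(1+\varepsilon)n/\log n$; your argument needs nothing of that strength, which is exactly what makes Erd\H{o}s's proof attractive here.
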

We make use of the prime-counting function and Theorem \ref{second important theorem} to find an upper bound for the sum of $\beta_p\log p$ in terms of $\alpha_p\log p$, where the sum is taken over appropriate values of $p$. We will state this in a while.
\begin{definition}[modified prime counting function]
Let $\pi(n,a,b)$ denote the number of primes less than or equal to $n$ which satisfies $n \equiv a \text{ }(\operatorname{mod}b)$.
\end{definition}
\begin{theorem}\label{theorem will not prove}
\begin{align}
    \sum\limits_{\begin{smallmatrix} 
 p\le n \\ 
 p\not{\equiv }1\text{ }(\operatorname{mod}4) 
\end{smallmatrix}}{{{\beta }_{p}}}\log p\le \frac{1}{2}\left\lceil \frac{n}{2} \right\rceil \log 2+\log ({{n}^{2}}+1)\pi (n,1,4)+\frac{1}{2}\sum\limits_{n<p<2n}{{{\alpha }_{p}}\log p}
\end{align}
\end{theorem}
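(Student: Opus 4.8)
The plan is to extract the desired sum from the already-established inequality (15), namely $\sum_{p\le n}\beta_p\log p<\tfrac12\sum_{p<2n}\alpha_p\log p$, which holds under the running hypothesis (for $n>3$) that $P_n$ is a perfect square. The decisive observation is that Lemma \ref{important lemma} collapses most of the right-hand side: if $p$ is an odd prime that is not $\equiv 1\ (\operatorname{mod}4)$, then $p$ divides no factor $k^2+1$ with $1\le k\le n$, hence $p\nmid P_n$ and $\alpha_p=0$. So on the right-hand side of (15) only the primes $p=2$ and $p\equiv 1\ (\operatorname{mod}4)$ survive; splitting the latter set at the threshold $n$, one gets $\tfrac12\sum_{p<2n}\alpha_p\log p=\tfrac12\alpha_2\log 2+\tfrac12\sum_{p\le n,\ p\equiv 1(4)}\alpha_p\log p+\tfrac12\sum_{n<p<2n}\alpha_p\log p$, where in the last sum the residue condition may be dropped since the remaining terms vanish — this is precisely the last term appearing in the claim.

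On the left I would write $\sum_{p\le n}\beta_p\log p=\sum_{p\le n,\ p\not\equiv 1(4)}\beta_p\log p+\sum_{p\le n,\ p\equiv 1(4)}\beta_p\log p$, substitute both expansions into (15), and move the block over primes $p\equiv 1\ (\operatorname{mod}4)$ to the right, which leaves
\[
\sum_{p\le n,\ p\not\equiv 1(4)}\beta_p\log p<\tfrac12\alpha_2\log 2+\sum_{p\le n,\ p\equiv 1(4)}\Bigl(\tfrac12\alpha_p-\beta_p\Bigr)\log p+\tfrac12\sum_{n<p<2n}\alpha_p\log p.
\]
For each prime $p\equiv 1\ (\operatorname{mod}4)$ with $p\le n$, Theorem \ref{second important theorem} gives $\tfrac12\alpha_p-\beta_p\le\log(n^2+1)/\log p$; multiplying by $\log p>0$ yields $(\tfrac12\alpha_p-\beta_p)\log p\le\log(n^2+1)$, and summing over the $\pi(n,1,4)$ such primes bounds the middle term by $\log(n^2+1)\,\pi(n,1,4)$. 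Finally the corollary $\alpha_2=\lceil n/2\rceil$ converts $\tfrac12\alpha_2\log 2$ into $\tfrac12\lceil n/2\rceil\log 2$, and since a strict inequality implies the weak one, the stated bound follows.

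There is no hard analytic step: the entire argument is a reorganization of (15). The one piece of genuine content is the vanishing $\alpha_p=0$ for $p\equiv 3\ (\operatorname{mod}4)$ supplied by Lemma \ref{important lemma}, which is what makes the right-hand side of (15) fall apart into the three listed pieces. The points deserving attention are clerical: carrying out the split of $\sum_{p<2n}\alpha_p\log p$ at $n$ so that the leftover $n<p<2n$ sum is literally the one in the statement, and noting that Theorem \ref{second important theorem} may be invoked term by term even when $\tfrac12\alpha_p-\beta_p$ is negative, since we only ever scale the inequality by the positive number $\log p$.
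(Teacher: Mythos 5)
Your argument is correct and is precisely the one the paper sketches: the paper declines to give details, but its hints --- inequality (40), the remark that $\frac12\lceil n/2\rceil\log 2$ is just $\alpha_2\log 2/2$, and the decomposition of $(1,2n)$ as $(1,n]\cup(n,2n)$ --- correspond exactly to your three steps starting from (15). The points you make explicit, namely that $\alpha_p=0$ for odd $p\not\equiv 1\ (\operatorname{mod}4)$ by Lemma \ref{important lemma} and that Theorem \ref{second important theorem} is applied term by term over the $\pi(n,1,4)$ relevant primes, correctly fill in what the paper leaves implicit.
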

We will not prove Theorem \ref{theorem will not prove} in detail. The definition of $\pi(n,a,b)$ is crucial here since if $p \equiv 1\text{ }(\operatorname{mod}4)$, then \begin{align}
    \operatorname{log}p\sum_{\begin{smallmatrix}p\le n \\ p\equiv 1\text{ }(\operatorname{mod}4)
    \end{smallmatrix}}\left(\frac{1}{2}\alpha_p-\beta_p\right)\le \operatorname{log}\left(n^2+1\right)\pi(n,1,4).
\end{align} Also, note that $\left\lfloor n/2\right\rfloor\operatorname{log}2/2$ is just $\alpha_2\operatorname{log}2/2$, and the open interval $(n,2n)$ can be expressed as $(1,2n) \backslash (1,n]$.
\begin{lemma}\label{estimate 1 crucial}
If $p>n$, then $\alpha_p\le 2$.
\end{lemma}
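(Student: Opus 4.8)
The plan is to combine the size hypothesis $p>n$ with the formula for $\alpha_p$ extracted in the proof of Lemma \ref{alpha beta first bounds lemma}. First I would dispose of the trivial case: if $p\nmid P_n$ then $\alpha_p=0\le 2$, so we may assume $p\mid(k^2+1)$ for some $1\le k\le n$, and then Lemma \ref{important lemma} forces $p\equiv 1\ (\operatorname{mod}4)$; in particular $p$ is odd and $p\ge 5$.

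Next I would use the hypothesis $p>n$, i.e. $p\ge n+1$, to get $p^2\ge(n+1)^2=n^2+2n+1>n^2+1$. Hence $\log(n^2+1)/\log p<2$, so in the identity $\alpha_p=\sum_{j\le\log(n^2+1)/\log p}|S_1|$ from (27) only the term $j=1$ survives; moreover for any $k$ in range, $k^2+1\le n^2+1<p^2$, so $p^2\nmid(k^2+1)$ and each relevant $k$ contributes exactly one factor of $p$. Thus $\alpha_p$ equals the number of $k\in\{1,\dots,n\}$ with $p\mid(k^2+1)$.

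It then remains to bound this count by $2$. The congruence $x^2\equiv -1\ (\operatorname{mod}p)$ has exactly two solutions modulo $p$, say $r$ and $p-r$, which are distinct since $p$ is odd. Because $\{1,2,\dots,n\}$ consists of $n<p$ consecutive integers, each residue class modulo $p$ is represented at most $\lceil n/p\rceil=1$ time; hence at most one value of $k$ lies in each of the two admissible classes, giving at most two such $k$ in total, so $\alpha_p\le 2$. Equivalently, one may simply write $\alpha_p=|S_1|\le 2\lceil n/p\rceil=2$ directly from (27).

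The argument is essentially bookkeeping, and the only point needing a little care is the strict inequality $n^2+1<p^2$, which is what collapses the sum for $\alpha_p$ to its $j=1$ term; this is precisely where the hypothesis $p>n$ (as opposed to the weaker $p<2n$ of Theorem 1) is used.
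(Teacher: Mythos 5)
Your proposal is correct and follows essentially the same route as the paper: use the hypothesis $p>n$ to get $n^2+1<p^2$, so that the sum defining the bound for $\alpha_p$ in (26) collapses to the single term $j=1$, and then $\alpha_p\le 2\lceil n/p\rceil=2$. The extra counting argument you give (two residues solving $x^2\equiv-1\ (\operatorname{mod}p)$, each hit at most once in a window of $n<p$ consecutive integers) is a nice explicit justification of the factor $2$, but it is already packaged in the paper's inequality (26), so the two proofs coincide in substance.
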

\begin{proof}
    We established the inequality \begin{align}
        \alpha_p \le \sum_{j\le \operatorname{log}(n^2+1)/\operatorname{log}p} 2\left\lceil{\frac{n}{p^j} }\right\rceil
    \end{align} in (26). If $p>n$, then $n/p<1$. A key observation is that \begin{align}
        \operatorname{log}(n^2+1) < 2\operatorname{log}p,
    \end{align}
    which is a consequence of $n^2+1 < p^2$ (the inequality sign does not change because the logarithmic function is strictly increasing). The solution to this inequality yields \begin{align}
        p^2-n^2 &>1\\
        (p+n)(p-n)&> 1
    \end{align}
    and this is justified because $p+n$ is positive, and $p-n$ is also positive (because $p>n$). As such, for $j\le \operatorname{log}(n^2+1)/\operatorname{log}p$, it would imply that $j<2$, so there is only one value of $j$ to consider in the sum for the case where $p>n$, and that is $j=1$. We briefly discussed this earlier as $n/p<1$, implying that $\left\lceil n/p\right\rceil \le 1$.
\end{proof}
\begin{lemma}\label{estimate 2 crucial}
If $p\le n$, then \begin{align}
    \beta_p \ge\frac{n-p}{p-1}-\frac{\operatorname{log}n}{\operatorname{log}p}\ge \frac{n-1}{p-1}-\frac{\operatorname{log}(n^2+1)}{\operatorname{log}p}.
\end{align}
\end{lemma}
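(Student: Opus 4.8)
The plan is to derive everything from Legendre's formula for the $p$-adic valuation of $n!$, namely the identity $\beta_p=\sum_{1\le j\le \log n/\log p}\lfloor n/p^j\rfloor$ already used in the proof of Lemma~\ref{alpha beta first bounds lemma}. Since $p\le n$, this sum is non-empty; set $m=\lfloor \log n/\log p\rfloor\ge 1$, so that $p^m\le n<p^{m+1}$.

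First I would establish the left inequality. Discarding the floors from below via $\lfloor x\rfloor>x-1$ gives $\beta_p>\sum_{j=1}^m\left(\frac{n}{p^j}-1\right)=\frac{n}{p-1}\left(1-p^{-m}\right)-m$ after summing the finite geometric series $\sum_{j=1}^m p^{-j}=\frac{1-p^{-m}}{p-1}$. The one estimate that does the work is $np^{-m}<p$, which is immediate from $n<p^{m+1}$; it turns $\frac{n}{p-1}(1-p^{-m})=\frac{n-np^{-m}}{p-1}$ into something strictly larger than $\frac{n-p}{p-1}$ (dividing by $p-1>0$). Together with $m\le \log n/\log p$ this yields $\beta_p>\frac{n-p}{p-1}-\frac{\log n}{\log p}$, which is (more than) the left inequality.

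Next I would handle the right inequality, which no longer mentions $\beta_p$ and is a pure inequality between real numbers. Moving the $\beta$-free terms around, it is equivalent to $\frac{\log(n^2+1)-\log n}{\log p}\ge\frac{(n-1)-(n-p)}{p-1}=1$, i.e.\ to $\log\!\left(\frac{n^2+1}{n}\right)\ge\log p$, i.e.\ to $p\le n+\frac1n$. Since $p\le n$ by hypothesis, this holds, so the full chain of inequalities is complete.

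The argument is routine, so I do not anticipate a genuine obstacle; the only points deserving care are checking that the geometric-series step and the bound $np^{-m}<p$ are invoked only when the sum is non-empty — which the hypothesis $p\le n$ guarantees, since then $m\ge 1$ — and noting that the strict inequality we obtain trivially implies the non-strict one asserted in the statement.
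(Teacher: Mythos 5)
Your proposal is correct and follows essentially the same route as the paper: both start from $\beta_p=\sum_{j\le \log n/\log p}\lfloor n/p^j\rfloor$, drop the floors, sum the geometric series, use $p^{\lfloor \log n/\log p\rfloor+1}\ge n$ (your $np^{-m}<p$) to reach $\frac{n-p}{p-1}-\frac{\log n}{\log p}$, and reduce the second inequality to $p\le n$. No substantive difference.
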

\begin{proof}
    Recall that (27) yields the equality \begin{align}
        \beta_p =\sum_{j\le \operatorname{log}n/\operatorname{log}p}\left\lfloor n/p^j\right\rfloor.
    \end{align}
    It is worth noting that \begin{align}
        \frac{n-p}{p-1}&=\frac{n}{p-1}-\frac{p}{p-1}.
    \end{align}
Hence,    
\begin{align}
   {{\beta }_{p}}&=\sum\limits_{j\le \log n/\log p}{\left\lfloor n/{{p}^{j}} \right\rfloor } \\ 
 & \ge \sum\limits_{j\le \log n/\log p}{\frac{n}{{{p}^{j}}}}-\sum\limits_{j\le \log n/\log p}{{}} \\ 
 & =n\sum\limits_{j\le \log n/\log p}{\frac{1}{{{p}^{j}}}}-\frac{\log n}{\log p} \\ 
 & =n\sum\limits_{j=1}^{\left\lfloor \log n/\log p \right\rfloor }{{{p}^{-j}}}-\frac{\log n}{\log p} \\ 
 & =\frac{n}{p-1}\left( 1-\frac{1}{{{p}^{\left\lfloor \log n/\log p \right\rfloor }}} \right)-\frac{\log n}{\log p} \\ 
 & =\frac{n}{p-1}-\frac{n}{{{p}^{\left\lfloor \log n/\log p \right\rfloor }}\left( p-1 \right)}-\frac{\log n}{\log p}  
\end{align}
What is left to show is \begin{align}
    \frac{n}{{{p}^{\left\lfloor \log n/\log p \right\rfloor }}\left( p-1 \right)}\le \frac{p}{p-1}.
\end{align}
In other words, \begin{align}
    {{p}^{\left\lfloor \log n/\log p \right\rfloor +1}}\ge n,
\end{align}
which is clear because \begin{align}
    \left\lfloor \frac{\operatorname{log}n}{\operatorname{log}p}\right\rfloor+1 \ge \frac{\operatorname{log}n}{\operatorname{log}p}
\end{align}
and $p^{\operatorname{log}n/\operatorname{log}p}=n$. With these, the lower bound \begin{align}
    \beta_p \ge\frac{n-p}{p-1}-\frac{\operatorname{log}n}{\operatorname{log}p}
\end{align} 
is obtained.
\newline
\newline Now, we show that \begin{align}
    \frac{n-p}{p-1}-\frac{\operatorname{log}n}{\operatorname{log}p}\ge \frac{n-1}{p-1}-\frac{\operatorname{log}(n^2+1)}{\operatorname{log}p}.
\end{align}
Working backwards, \begin{align}
   \frac{\log ({{n}^{2}}+1)}{\log p} &\ge 1+\frac{\log n}{\log p} \\ 
 \log ({{n}^{2}}+1) &\ge \log pn \\ 
 {{n}^{2}}-pn+1 &\ge 1 \\ 
 n\left( n-p \right) &\ge 0  
\end{align}
For the quadratic inequality $n(n-p)\ge 0$ in (61), since $n$ is always positive, then $n-p\ge 0$, or equivalently, $n\ge p$, which is the condition stated in the lemma.
\end{proof}
\begin{lemma}
\begin{align}
    \left( n-1 \right)\sum\limits_{\begin{smallmatrix} 
 p\le n \\ 
 p\not{\equiv }1\text{ }\left( \operatorname{mod} 4 \right) 
\end{smallmatrix}}{\frac{\log p}{p-1}}< \left( n+1 \right)\frac{\log 2}{4}+\log \left( {{n}^{2}}+1 \right)\pi \left( n \right)+\sum\limits_{n<p<2n}{\log p}
\end{align}
\end{lemma}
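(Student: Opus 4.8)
The plan is to feed the one-sided estimates of Lemma~\ref{estimate 1 crucial} and Lemma~\ref{estimate 2 crucial} into Theorem~\ref{theorem will not prove} and then trade the modified prime-counting function for the ordinary one. Write $N=\#\{p\le n:\ p\not\equiv 1\pmod 4\}$ for the size of the index set of the sum on the left of the claim; since a prime $p\le n$ is either $\equiv 1\pmod 4$ or is one of those $N$ primes (namely $p=2$ or $p\equiv 3\pmod 4$), we have the bookkeeping identity $\pi(n,1,4)+N=\pi(n)$. The case $n=1$ is immediate, the left-hand sum being empty while the right-hand side is positive, so from now on assume $n\ge 2$.

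First I would bound the left-hand side of Theorem~\ref{theorem will not prove} from below. Lemma~\ref{estimate 2 crucial} gives $\beta_p\ge \frac{n-1}{p-1}-\frac{\log(n^2+1)}{\log p}$ for every prime $p\le n$, and in fact strictly so, since the sum defining $\beta_p$ already contains the single term $\lfloor n/p\rfloor$ and $\lfloor x\rfloor>x-1$ --- this is precisely the inequality used inside the proof of that lemma. Multiplying by $\log p>0$ and summing over the nonempty set $\{p\le n:\ p\not\equiv 1\pmod 4\}$ (it contains $p=2$) gives
\[
  \sum_{\substack{p\le n\\ p\not\equiv 1\,(4)}}\beta_p\log p \;>\; (n-1)\sum_{\substack{p\le n\\ p\not\equiv 1\,(4)}}\frac{\log p}{p-1}\;-\;N\log(n^2+1).
\]
On the right-hand side of Theorem~\ref{theorem will not prove} I would use $\lceil n/2\rceil\le (n+1)/2$, so that $\tfrac12\lceil n/2\rceil\log 2\le (n+1)\tfrac{\log 2}{4}$, together with Lemma~\ref{estimate 1 crucial}, which gives $\alpha_p\le 2$ for every prime $p>n$ and hence $\tfrac12\sum_{n<p<2n}\alpha_p\log p\le \sum_{n<p<2n}\log p$. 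Chaining the displayed strict inequality, Theorem~\ref{theorem will not prove}, and these two bounds, then moving $N\log(n^2+1)$ to the other side and applying $\pi(n,1,4)+N=\pi(n)$, yields exactly the asserted inequality, the strictness being inherited from the $\beta_p$ step.

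The content here is all in the setup; the one delicate point is the passage from $\pi(n,1,4)$ to $\pi(n)$. The primes that are discarded in replacing each $\beta_p$ by its lower bound are exactly the $N$ primes recorded in the error term $N\log(n^2+1)$, and it is their reabsorption into $\pi(n,1,4)$ that rebuilds $\pi(n)$; one must also check that the summation range is nonempty, so that the single strict estimate survives the summation, and dispose of $n=1$ separately as above. I expect no genuine obstacle beyond this bookkeeping.
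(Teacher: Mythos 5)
Your proposal is correct and follows essentially the same route as the paper: substitute the bounds of Lemma~\ref{estimate 1 crucial} and Lemma~\ref{estimate 2 crucial} into Theorem~\ref{theorem will not prove}, bound $\tfrac12\lceil n/2\rceil\log 2$ by $(n+1)\tfrac{\log 2}{4}$, and absorb the $N\log(n^2+1)$ error term into $\log(n^2+1)\pi(n,1,4)$ via $\pi(n,1,4)+N=\pi(n)$. Your version is in fact slightly more careful than the paper's, since you make explicit both where the strict inequality originates and the bookkeeping identity that converts $\pi(n,1,4)$ into $\pi(n)$.
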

\begin{proof}
    We apply the estimates in Lemmas \ref{estimate 1 crucial} and \ref{estimate 2 crucial} to Theorem \ref{theorem will not prove}, so 
    \begin{align}
        \sum\limits_{\begin{smallmatrix}
   p\le n  \\
   p\not{\equiv }1\text{ }(\operatorname{mod} 4)  \\
\end{smallmatrix}}{\left( \frac{n-1}{p-1}-\frac{\log ({{n}^{2}}+1)}{\log p} \right)\log p}&<\frac{1}{2}\left\lceil \frac{n}{2} \right\rceil \log 2+\log ({{n}^{2}}+1)\pi (n,1,4)+\sum\limits_{n<p<2n}{\log p}.
    \end{align}
As such, a strict upper bound for \begin{align}
    \left( n-1 \right)\sum_{\begin{smallmatrix}
       p\le n\\ p\not\equiv 1\text{ }(\operatorname{mod} 4)
   \end{smallmatrix}}\frac{\log p}{p-1}-{\sum_{\begin{smallmatrix}
       p\le n\\ p\not\equiv 1\text{ }(\operatorname{mod} 4)
   \end{smallmatrix}}}\log ({{n}^{2}}+1)
\end{align} is \begin{align}
    \frac{1}{2}\left\lceil \frac{n}{2} \right\rceil \log 2+\log ({{n}^{2}}+1)\pi (n,1,4)+\sum\limits_{n<p<2n}{\log p}
\end{align}
and (39) evaluates to 
\begin{align}
\left( n+1 \right)\frac{\log 2}{4}+\log ({{n}^{2}}+1)\pi (n)+\sum\limits_{n<p<2n}{\log p},
\end{align}
yielding the desired result.
\end{proof}
To wrap up the proof, we exploit Chebyshev's functions. The first and second Chebyshev Functions are denoted by $\vartheta(n)$ and $\psi(n)$ respectively.
\begin{definition}[von Mangoldt function]
    For every integer $n\ge 1$, define \[\Lambda \left( n \right)=\left\{ \begin{array}{*{35}{l}}
   \log p & \text{if }n={{p}^{m}}\text{ for some prime }p\text{ and some }m\ge 1,  \\
   0 & \text{otherwise}\text{.}  \\
\end{array} \right.\]
\end{definition}
\begin{definition}[first Chebyshev function]
For any real number $n\ge 1$, \begin{align}
        \vartheta(n)=\sum_{p\le n}\operatorname{log}p.
    \end{align}
\end{definition}
\begin{definition}[second Chebyshev function]
For any real number $n\ge 1$, \begin{align}
        \psi(x)=\sum_{n\le x}\Lambda(n)=\sum_{p^m\le x}\operatorname{log}p.
    \end{align}
\end{definition}
\begin{lemma}\label{theta bounded by psi lemma}
For all $n\ge 1$, $\vartheta(n) \le \psi(n)$ \cite{apostol}.
\end{lemma}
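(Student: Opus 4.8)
The plan is to read off the inequality directly from the definitions by discarding nonnegative terms from a sum. Recall that the second Chebyshev function is $\psi(n) = \sum_{p^m \le n} \log p$, the sum running over all pairs $(p,m)$ with $p$ prime, $m \ge 1$, and $p^m \le n$; equivalently $\psi(n) = \sum_{k \le n} \Lambda(k)$ via the von Mangoldt function. The index set of this sum contains every prime $p \le n$ (take $m = 1$), and the terms contributed by those are exactly $\log p$, whose total is $\vartheta(n)$.

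First I would split $\psi(n)$ according to whether $m = 1$ or $m \ge 2$, writing $\psi(n) = \sum_{p \le n}\log p + \sum_{m \ge 2}\sum_{p^m \le n}\log p = \vartheta(n) + R(n)$. Then I would observe that every term $\log p$ with $p$ prime satisfies $\log p \ge \log 2 > 0$, so the remainder $R(n)$ is a sum of nonnegative terms and hence $R(n) \ge 0$; in fact $R(n) = 0$ precisely when $n < 4$. This gives $\psi(n) = \vartheta(n) + R(n) \ge \vartheta(n)$ for every $n \ge 1$, which is the claim.

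There is essentially no obstacle here: the only point requiring care is the (obvious) remark that all the logarithms $\log p$ occurring are nonnegative, which is what legitimizes dropping the $m \ge 2$ terms to obtain a one-sided bound. An alternative packaging of the same idea is to note $\psi(n) = \sum_{m \ge 1}\vartheta(n^{1/m})$, a finite sum since $\vartheta(x) = 0$ for $x < 2$, whose $m = 1$ summand is $\vartheta(n)$ while all later summands are nonnegative because $\vartheta$ is itself a nonnegative function. This fact is standard \cite{apostol}; we spell out the short argument only because the main proof will feed Lemma \ref{theta bounded by psi lemma} into an inequality for $\psi(n)$.
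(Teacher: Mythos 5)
Your proof is correct and is essentially the same argument as the paper's: the paper writes $\psi(n)=\vartheta(n)+\vartheta(\sqrt{n})+\vartheta(\sqrt[3]{n})+\cdots$ and drops the nonnegative tail, which is exactly the ``alternative packaging'' you mention, and your primary $m=1$ versus $m\ge 2$ split is the same idea. Your version is slightly more explicit about why the discarded terms are nonnegative, but there is no substantive difference.
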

\begin{proof}
It is clear by the definitions of $\vartheta(n)$ and $\psi(n)$ that \begin{align}
    \psi (n)=\vartheta (n)+\vartheta \left( \sqrt{n} \right)+\vartheta \left( \sqrt[3]{n} \right)+\ldots 
\end{align}
so the result follows.
\end{proof}
\begin{corollary}\cite{chebyinequality}
    If $p$ is prime, then \begin{align}
        \vartheta(n)\le \pi(n)\operatorname{log}n.
    \end{align}
\end{corollary}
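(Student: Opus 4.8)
The plan is to prove the inequality directly from the definition of the first Chebyshev function, bounding each term of the defining sum by a single uniform quantity. Recall that $\vartheta(n)=\sum_{p\le n}\log p$, where the sum is taken over all primes $p$ with $p\le n$, and that by the definition of the prime-counting function there are exactly $\pi(n)$ such primes.

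First I would observe that for every prime $p$ occurring in the sum we have $1<p\le n$, and since $t\mapsto\log t$ is increasing on $(0,\infty)$ this gives $\log p\le\log n$. Replacing each summand $\log p$ by the uniform upper bound $\log n$ then yields
\begin{align}
    \vartheta(n)=\sum_{p\le n}\log p\le\sum_{p\le n}\log n=\pi(n)\log n,
\end{align}
where the final equality merely records that the summand $\log n$ no longer depends on $p$ while the number of primes $p\le n$ is, by definition, $\pi(n)$. This is exactly the asserted bound. (For $1\le n<2$ there are no primes at most $n$, so both sides are $0$ and the inequality is trivial.)

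There is no substantive obstacle here: the whole argument is the elementary estimate $\sum_{p\le n}a_p\le\pi(n)\max_{p\le n}a_p$ specialised to $a_p=\log p$. The only points that deserve a word are the monotonicity of $\log$ and the identification of the number of primes not exceeding $n$ with $\pi(n)$; everything else is bookkeeping. What makes the corollary useful is that, combined with the earlier lemmas and Lemma~\ref{theta bounded by psi lemma}, it lets us convert $\vartheta$-type sums such as $\sum_{n<p<2n}\log p$ into controllable multiples involving $\pi$, which is what ultimately produces an explicit numerical bound on $n$.
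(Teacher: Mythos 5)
Your proof is correct, but it takes a genuinely more direct route than the paper. You bound each summand of $\vartheta(n)=\sum_{p\le n}\log p$ by $\log n$ using monotonicity of the logarithm and then count the $\pi(n)$ terms — a one-line term-by-term estimate that needs nothing beyond the definitions of $\vartheta$ and $\pi$. The paper instead routes the argument through the second Chebyshev function: it invokes Lemma \ref{theta bounded by psi lemma} to get $\vartheta(n)\le\psi(n)$, writes $\psi(n)=\sum_{p\le n}\left\lfloor \log n/\log p\right\rfloor\log p$, and bounds the floor by $\log n/\log p$ to obtain $\psi(n)\le\pi(n)\log n$. The paper's detour actually establishes the stronger statement that $\psi(n)$ itself is at most $\pi(n)\log n$, and it showcases the relationship between $\vartheta$, $\psi$, and $\pi$; your argument buys simplicity and self-containment, since it does not depend on the prior lemma or on the von Mangoldt machinery at all. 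Both are valid; for the corollary as literally stated, yours is the leaner proof.
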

\begin{proof}
Using Lemma \ref{theta bounded by psi lemma}, as $\vartheta(n)\le \psi(n)$, by definition of $\psi(n)$, we have \begin{align}
    \psi(n)=\sum_{p\le n}\left\lfloor\frac{\operatorname{log}n}{\operatorname{log}p}\right\rfloor \cdot \operatorname{log}p\le \sum_{p\le n}\frac{\operatorname{log}n}{\operatorname{log}p}\cdot \operatorname{log}p=\sum_{p\le n}1 \cdot \operatorname{log}n=\pi(n)\operatorname{log}n
\end{align}
This shows that $\vartheta(n)$ and $\pi(n)$ are closely related.
\end{proof}
\begin{theorem}
    If $p$ is prime, then \begin{align}
        \sum_{n<p<2n}\operatorname{log}p\le \vartheta(2n-1).
    \end{align}
\end{theorem}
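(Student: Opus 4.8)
The plan is to recognize that the left-hand side is a partial sum of the very series defining $\vartheta$, restricted to a subinterval, and that every term being added is nonnegative. First I would recall that, by the definition of the first Chebyshev function, $\vartheta(2n-1) = \sum_{p \le 2n-1} \log p$, the sum ranging over all primes $p$ with $p \le 2n-1$.

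Next I would observe that the index set of the left-hand side is contained in that of $\vartheta(2n-1)$. Indeed, if $p$ is a prime with $n < p < 2n$, then since $p$ is an integer the strict inequality $p < 2n$ forces $p \le 2n-1$; hence
\begin{align}
    \{\, p \text{ prime} : n < p < 2n \,\} \subseteq \{\, p \text{ prime} : p \le 2n-1 \,\}.
\end{align}
I would then note that for every prime $p$ we have $\log p \ge \log 2 > 0$, so each term is strictly positive. Summing a collection of positive quantities over a subset gives at most the sum over the full set, which yields
\begin{align}
    \sum_{n < p < 2n} \log p \le \sum_{p \le 2n-1} \log p = \vartheta(2n-1),
\end{align}
as required.

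There is essentially no obstacle here: the only point needing a word of care is the passage from the strict bound $p < 2n$ to $p \le 2n - 1$, which is exactly where integrality of primes is used, and the observation that dropping no terms while possibly adjoining the primes $p \le n$ can only enlarge the sum. One could of course prove the sharper statement $\sum_{n<p<2n}\log p = \vartheta(2n-1) - \vartheta(n)$, but the weaker inequality above is all that is needed in what follows, so I would stop here.
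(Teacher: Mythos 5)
Your proof is correct and takes essentially the same route as the paper: both arguments reduce to the observation that the primes with $n<p<2n$ form a subset of those counted by $\vartheta(2n-1)$ and that each $\log p$ is positive (the paper phrases this as $\sum_{n<p<2n}\log p=\vartheta(2n-1)-\vartheta(n)$, the sharper identity you note at the end). No gap to report.
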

\begin{proof}
The sum of $\operatorname{log}p$ over the interval $n<p<2n$ is $\vartheta(2n-1)-\vartheta(n)$, and the result follows.
\end{proof}
\begin{theorem} \cite{erdos} For all $\varepsilon>0$, there exists $M>0$ such that for all $n>M$,
    \begin{align}
        \pi(n)<(1+\varepsilon)\frac{n}{\operatorname{log}n}.
    \end{align}
\end{theorem}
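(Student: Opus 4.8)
The plan is to deduce this directly from the prime number theorem, which we already have at our disposal. By definition, $\pi(n)\sim n/\log n$ means
\begin{align*}
\lim_{n\to\infty}\frac{\pi(n)\log n}{n}=1.
\end{align*}
So, fixing $\varepsilon>0$, the definition of the limit supplies an integer $M=M(\varepsilon)$ with $\bigl|\pi(n)\log n/n-1\bigr|<\varepsilon$ for every $n>M$; in particular $\pi(n)\log n/n<1+\varepsilon$, which rearranges to the asserted inequality $\pi(n)<(1+\varepsilon)\,n/\log n$. At this level the theorem is merely an unpacking of the asymptotic notation, and that is the route I would take in the paper, since the prime number theorem is already stated above.

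What I would add as a remark — and this is presumably the reason for the attribution to Erd\H{o}s — is that the result can in fact be obtained without complex analysis. The short elementary ingredient is Chebyshev's: from the divisibility $\prod_{n<p\le 2n}p\mid\binom{2n}{n}$ together with $\binom{2n}{n}<4^{n}$ one gets $\vartheta(2n)-\vartheta(n)<(2\log 2)\,n$, and summing this over the dyadic chain $n,2n,4n,\dots$ produces a linear bound $\vartheta(x)<Cx$ with an absolute constant $C$. One then converts back from $\vartheta$ to $\pi$ — the opposite direction to the Corollary above — via the elementary estimate, valid for any $\alpha\in(0,1)$,
\begin{align*}
\vartheta(n)\ \ge\ \sum_{n^{\alpha}<p\le n}\log p\ \ge\ \alpha\log n\,\bigl(\pi(n)-\pi(n^{\alpha})\bigr),
\end{align*}
so that $\pi(n)\le \vartheta(n)/(\alpha\log n)+n^{\alpha}\le Cn/(\alpha\log n)+n^{\alpha}$.

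The main obstacle is that this short argument does not by itself reach the constant $1+\varepsilon$: the binomial bound contributes a factor of size $\log 4=2\log 2\approx 1.386$, and dividing by $\alpha<1$ only inflates it further, so one is left with a constant bounded away from $1$. Closing the gap all the way down to $1+\varepsilon$ for arbitrary $\varepsilon$ requires either the analytic prime number theorem or Erd\H{o}s's (considerably longer) elementary proof of it; the crude Chebyshev estimate alone is not enough. For the purposes of this paper I would therefore record the theorem in the one-line form above, invoking the prime number theorem, and simply note that for the downstream application any fixed admissible constant — for instance taking $\varepsilon=1$, or even the raw Chebyshev value — already suffices, so nothing is lost by not chasing the optimal constant here.
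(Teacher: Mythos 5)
Your proposal is correct, and it is worth noting that the paper itself offers no proof of this statement at all --- it is stated bare with only the citation. Your one-line deduction from the prime number theorem (which the paper has already recorded as $\pi(n)\sim n/\log n$) is exactly the right justification: unpacking the limit $\pi(n)\log n/n\to 1$ immediately gives, for each $\varepsilon>0$, a threshold $M$ beyond which $\pi(n)<(1+\varepsilon)\,n/\log n$. Your supplementary remark is also well taken and, in fact, flags a genuine weakness in the paper's sourcing: the cited reference concerns Erd\H{o}s's proof of Bertrand's postulate, which only yields Chebyshev-type bounds $\pi(n)\le C\,n/\log n$ with a constant $C$ bounded away from $1$ (of order $2\log 2/\alpha$ in your notation), and cannot by itself deliver the constant $1+\varepsilon$ for arbitrary $\varepsilon$. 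So the theorem as stated really does rest on the prime number theorem (or its elementary Erd\H{o}s--Selberg proof), and your observation that any fixed admissible constant would suffice for the downstream application in Theorem \ref{upper bound for logpp-1 p equiv 3 mod 4} is a sensible way to insulate the argument from this issue.
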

\begin{theorem}\label{upper bound for logpp-1 p equiv 3 mod 4}
An upper bound for $\operatorname{log}p/(p-1)$ is established \cite{cilleruelo}, and it is given by the inequality
\begin{align}
    \sum_{\begin{smallmatrix}p\le n \\ p\not\equiv 1\text{ }(\operatorname{mod}4)\end{smallmatrix}}\frac{\operatorname{log}p}{p-1}<4+\frac{\operatorname{log}2}{4}.
\end{align}
\end{theorem}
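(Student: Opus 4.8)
The plan is to apply the analytic estimates assembled above to the inequality
\begin{align*}
  (n-1)\sum_{\begin{smallmatrix}p\le n\\ p\not\equiv 1\,(\operatorname{mod}4)\end{smallmatrix}}\frac{\log p}{p-1}
  <(n+1)\frac{\log 2}{4}+\log(n^2+1)\,\pi(n)+\sum_{n<p<2n}\log p
\end{align*}
of the preceding lemma. Writing $S(n)$ for the sum on the left and dividing through by $n-1$ reduces the claim to showing that
\begin{align*}
  R(n):=\frac{n+1}{n-1}\cdot\frac{\log 2}{4}+\frac{\log(n^2+1)\,\pi(n)}{n-1}+\frac{1}{n-1}\sum_{n<p<2n}\log p
\end{align*}
stays below $4+\frac{\log 2}{4}$, since $S(n)<R(n)$.

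I would estimate the three pieces of $R(n)$ in turn. The first tends to $\frac{\log 2}{4}$. For the second, writing $\log(n^2+1)=2\log n+\log(1+n^{-2})$ and applying $\pi(x)<(1+\varepsilon)\frac{x}{\log x}$ (valid past some threshold) bounds it by $(1+\varepsilon)\frac{n}{n-1}\cdot\frac{\log(n^2+1)}{\log n}$, which tends to $2(1+\varepsilon)$. For the third, the crude estimate $\sum_{n<p<2n}\log p\le\vartheta(2n-1)\le\pi(2n-1)\log(2n-1)<(1+\varepsilon)(2n-1)$ makes it at most $(1+\varepsilon)\frac{2n-1}{n-1}\to 2(1+\varepsilon)$, while the sharper observation $\sum_{n<p<2n}\log p\le(\pi(2n-1)-\pi(n))\log(2n-1)$, together with $\pi(2n-1)-\pi(n)\sim n/\log n$, makes it tend to $1$ instead. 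Taking $n\to\infty$ and then $\varepsilon\to 0^{+}$ with the sharper third estimate gives $\lim_{n\to\infty}R(n)=3+\frac{\log 2}{4}<4+\frac{\log 2}{4}$, so $R(n)<4+\frac{\log 2}{4}$ for every $n$ past some explicit $N$, and hence $S(n)<4+\frac{\log 2}{4}$ there. The finitely many $n<N$ are handled directly; monotonicity of $S$ — each prime $p\le n$ with $p\not\equiv 1\,(\operatorname{mod}4)$ adds the positive term $\frac{\log p}{p-1}$ — reduces this to a single value, and the existence of infinitely many primes $\equiv 3\,(\operatorname{mod}4)$, forcing $S$ to increase strictly infinitely often, upgrades the non-strict bound coming from the limit to the strict inequality claimed.

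The step I expect to be the genuine obstacle is this quantitative bookkeeping, not any single idea. Carried out with the crude chain $\sum_{n<p<2n}\log p\le\vartheta(2n-1)\le\pi(2n-1)\log(2n-1)$, the limiting value of $R(n)$ is exactly the target $4+\frac{\log 2}{4}$, which leaves no slack; that is why one is pushed to the sharper count of primes in $(n,2n)$, and why the threshold $N$ — together with its dependence on the $\varepsilon$ in the prime-counting bound — must be kept explicit enough that the remaining $n$ really are a finite check. It is also worth recalling that this inequality lives inside the proof by contradiction begun after Theorem \ref{pn non square}, where the displayed inequality above is exactly the hypothesis in force; set against the unconditional growth $S(n)=\tfrac{1}{2}\log n+O(1)$, it is precisely the mechanism that confines $n$ to a bounded range.
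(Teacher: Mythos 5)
Your proposal follows the same skeleton as the paper's proof: start from the preceding lemma, divide by $n-1$, control $\pi(n)$ by $(1+\varepsilon)n/\log n$, and control the prime sum over $(n,2n)$ through $\vartheta(2n-1)$. The one place you diverge is the decisive one. The paper uses exactly the crude chain you describe, $\sum_{n<p<2n}\log p\le\vartheta(2n-1)\le\pi(2n-1)\log(2n-1)<(1+\varepsilon)(2n-1)$, and then concludes by letting $n\to\infty$ and $\varepsilon\to 0^{+}$, so that its upper bound tends to exactly $4+\frac{\log 2}{4}$. Since $\frac{n+1}{n-1}$, $\frac{\log(n^2+1)}{\log n}$ and $\frac{2n-1}{n-1}$ all approach their limits from above, that chain never yields the strict inequality for any finite $n$; your observation that the crude route ``leaves no slack'' is precisely the defect in the paper's own argument. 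Your sharper count $\sum_{n<p<2n}\log p\le(\pi(2n-1)-\pi(n))\log(2n-1)$, which drives the third term to $1$ rather than $2$ and the limiting bound down to $3+\frac{\log 2}{4}$, is what makes the strict inequality genuinely attainable for all $n$ beyond an explicit threshold. In short, your route is a repaired version of the paper's, and the repair is needed.

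One caveat concerns your plan for the finitely many $n<N$. As you note at the end, $S(n)=\frac{1}{2}\log n+\mathcal{O}(1)$ unconditionally, so $S(n)$ eventually exceeds $4+\frac{\log 2}{4}\approx 4.1733$ (the paper places this crossing near $n=1831$); the asserted inequality is therefore \emph{false} unconditionally for large $n$ and only lives inside the reductio begun after Theorem~\ref{pn non square}. Consequently your proposal to verify $S(N)<4+\frac{\log 2}{4}$ numerically and propagate it downward by monotonicity can only succeed if the threshold $N$ coming from your prime-counting estimates lands below the crossing point, and the final remark about upgrading a non-strict bound via infinitely many primes congruent to $3$ modulo $4$ is unnecessary, since the lemma you start from is already a strict inequality. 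In the paper's architecture none of this matters: the theorem is only ever invoked for $n$ at and beyond the crossing point, where it collides with the unconditional lower bound on $S(n)$ to produce the contradiction, while the range $4\le n\le 1830$ is disposed of separately by the argument using Lemma~\ref{important lemma}. So you should present the inequality as holding for all sufficiently large $n$ under the standing hypothesis, with the threshold made explicit, rather than for every $n$.
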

\begin{proof}
\begin{align}
   \sum_{\begin{smallmatrix}
   p\le n  \\
   p\not{\equiv }1\text{ }\left( \operatorname{mod} 4 \right)  \\
\end{smallmatrix}}\frac{\log p}{p-1}&<\frac{n+1}{n-1}\cdot \frac{\log 2}{4}+\frac{\log \left( {{n}^{2}}+1 \right)}{n-1}\pi \left( n \right)+\frac{1}{n-1}\sum\limits_{n<p<2n}{\log p} \\ 
 & <\frac{n+1}{n-1}\cdot \frac{\log 2}{4}+(1+\varepsilon )\cdot \frac{n}{n-1}\cdot \frac{\log \left( {{n}^{2}}+1 \right)}{\log n}+\frac{1}{n-1}\vartheta (2n-1) \\ 
 & <\frac{n+1}{n-1}\cdot \frac{\log 2}{4}+(1+\varepsilon )\cdot \frac{n}{n-1}\cdot \frac{\log \left( {{n}^{2}}+1 \right)}{\log n}+\frac{1}{n-1}(1+\varepsilon )\frac{2n-1}{\log \left( 2n-1 \right)}\cdot \log \left( 2n-1 \right) \\ 
 & =\frac{n+1}{n-1}\cdot \frac{\log 2}{4}+(1+\varepsilon )\cdot \frac{n}{n-1}\cdot \frac{\log \left( {{n}^{2}}+1 \right)}{\log n}+(1+\varepsilon )\cdot \frac{2n-1}{n-1}
\end{align}
For large $n$, (78) evaluates to (L'Hôpital's Rule is useful here) \begin{align}
    \frac{\log 2}{4}+4(1+\varepsilon ).
\end{align}
Taking $\varepsilon>0$ to be arbitrarily small, the result follows.
\end{proof}
In Cilleruelo's proof, he cited different bounds used by Hardy and Wright in `An Introduction to the Theory of Numbers'. We obtained a better bound as mentioned in (79) in our proof although it does not affect the computation. At this stage, note that $4+\operatorname{log}2/4\approx 4.1732$. For $n\ge 1831$, we obtain the inequality \begin{align}
    \sum_{\begin{smallmatrix}
        p\le n\\ p \not\equiv 1\text{ }(\operatorname{mod}4)
    \end{smallmatrix}}\frac{\operatorname{log}p}{p-1}>4.1732,
\end{align}
so we have verified Theorem \ref{pn non square} for $n\ge 1831$.
\newline
\newline It is worth appreciating the following fact about the function $\operatorname{log}p/(p-1)$. 
\begin{theorem}\label{big o theorem}
    We have \begin{align}
        \sum_{p\le n}\frac{\operatorname{log}p}{p-1}=\operatorname{log}n+\mathcal{O}(1),
    \end{align}
    where $\mathcal{O}$ denotes the Big O Notation.
\end{theorem}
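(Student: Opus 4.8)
\emph{Proof proposal.} The plan is to reduce the statement to Mertens' first theorem, namely $\sum_{p\le n}\frac{\log p}{p}=\log n+\mathcal{O}(1)$, and then absorb the discrepancy between $\frac{\log p}{p-1}$ and $\frac{\log p}{p}$ into the error term. First I would observe that
\[
\frac{\log p}{p-1}-\frac{\log p}{p}=\frac{\log p}{p(p-1)},
\]
and that $\sum_{p}\frac{\log p}{p(p-1)}$ converges, for instance by comparison with the convergent series $\sum_{m\ge 2}\frac{\log m}{m(m-1)}$. Hence $\sum_{p\le n}\frac{\log p}{p-1}=\sum_{p\le n}\frac{\log p}{p}+\mathcal{O}(1)$, and it suffices to prove the claimed estimate with $p-1$ replaced by $p$.

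For the latter I would exploit two expressions for $\log(n!)$. On one hand, Stirling's approximation gives $\log(n!)=n\log n-n+\mathcal{O}(\log n)$. On the other, Legendre's formula for the $p$-adic valuation of $n!$ yields
\[
\log(n!)=\sum_{p\le n}\log p\sum_{m\ge 1}\left\lfloor\frac{n}{p^m}\right\rfloor
=\sum_{p\le n}\left\lfloor\frac{n}{p}\right\rfloor\log p+\sum_{p\le n}\log p\sum_{m\ge 2}\left\lfloor\frac{n}{p^m}\right\rfloor .
\]
The double sum over $m\ge 2$ is bounded above by $n\sum_{p}\frac{\log p}{p(p-1)}=\mathcal{O}(n)$. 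Writing $\lfloor n/p\rfloor=n/p-\{n/p\}$ and recalling $\sum_{p\le n}\log p=\vartheta(n)$, the remaining term becomes $n\sum_{p\le n}\frac{\log p}{p}+\mathcal{O}(\vartheta(n))$. Since $\vartheta(n)\le\pi(n)\log n=\mathcal{O}(n)$ by the corollary to the Chebyshev inequality together with the bound $\pi(n)<(1+\varepsilon)\,n/\log n$ for large $n$, this error is $\mathcal{O}(n)$ as well.

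Equating the two evaluations of $\log(n!)$ and dividing through by $n$ then gives $\sum_{p\le n}\frac{\log p}{p}=\log n+\mathcal{O}(1)$, and combining with the first paragraph completes the argument. The main obstacle here is not any deep input but the careful bookkeeping of error terms — in particular verifying that the higher-prime-power contribution and the fractional-part contribution are genuinely $\mathcal{O}(n)$ uniformly in $n$; the only substantial external ingredients are Stirling's formula and the Chebyshev-type bound $\vartheta(n)=\mathcal{O}(n)$, the latter already being available from the results collected above.
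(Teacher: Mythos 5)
Your reduction to Mertens' first theorem reproduces the paper's first (and only substantive) step: the paper likewise writes $\frac{\log p}{p-1}-\frac{\log p}{p}=\frac{\log p}{p(p-1)}$, notes that the resulting series is $\mathcal{O}(1)$, and then simply asserts that $\sum_{p\le n}\frac{\log p}{p}$ is bounded by $\log n+\mathcal{O}(1)$ with no further justification. Where you genuinely differ is that you go on to prove that assertion, via the two evaluations of $\log(n!)$ --- Stirling's formula on one side and Legendre's formula on the other --- with the higher-power contribution bounded by $n\sum_{p}\frac{\log p}{p(p-1)}=\mathcal{O}(n)$ and the fractional-part contribution bounded by $\vartheta(n)=\mathcal{O}(n)$; dividing by $n$ then gives Mertens' estimate, and the bookkeeping checks out. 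This buys a self-contained argument where the paper leaves Mertens' theorem as an unproved input, and your convergence comparison with $\sum_{m\ge 2}\frac{\log m}{m(m-1)}$ is also cleaner than the paper's displayed chain $\sum_{p\le n}\frac{\log p}{p(p-1)}\ll\sum_{p\le n}\frac{\log n}{n(n-1)}$, which is not a legitimate termwise bound as written. One small remark: you invoke $\pi(n)<(1+\varepsilon)n/\log n$ to conclude $\vartheta(n)=\mathcal{O}(n)$, which is valid and consistent with the results quoted in the paper, but it is heavier than necessary --- the elementary Chebyshev bound $\vartheta(n)\le(\log 4)n$ already suffices and keeps the proof free of prime-number-theorem-strength input.
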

\newpage
Theorem \ref{big o theorem} implies that the execution time of this computation is independent of the size of the input. We provide a proof of this theorem.
\newline
\begin{proof} We have
    \begin{align}
   \sum\limits_{p\le n}{\frac{\log p}{p-1}}-\frac{\log p}{p}&=\sum\limits_{p\le n}{\log p\left( \frac{1}{p\left( p-1 \right)} \right)} \ll \sum\limits_{p\le n}{\frac{\log n}{n\left( n-1 \right)}} \ll 1.  
\end{align}
Hence, \begin{align}
    \sum_{p\le n}\frac{\operatorname{log}p}{p-1}=\sum_{p\le n}\frac{\operatorname{log}p}{p}
\end{align}
and this is bounded above by $\operatorname{log}n+\mathcal{O}(1)$.
\end{proof}
\begin{corollary}\label{logpp-1 sum diverges}
    The sum \begin{align}
        \sum_{p\le n}\frac{\operatorname{log}p}{p-1}
    \end{align}
    diverges for $p\ge 2$.
\end{corollary}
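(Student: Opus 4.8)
The plan is to derive divergence directly from the asymptotic estimate in Theorem \ref{big o theorem}, which is the natural tool already assembled in the excerpt. By that theorem we may write
\begin{align*}
    \sum_{p\le n}\frac{\log p}{p-1}=\log n+R(n),
\end{align*}
where $R(n)=\mathcal{O}(1)$, so there is a constant $M>0$ with $|R(n)|\le M$ for all $n\ge 2$. Hence the partial sum is at least $\log n-M$, and since $\log n\to\infty$ as $n\to\infty$, the partial sums are unbounded; a series of nonnegative terms with unbounded partial sums diverges. This is essentially all there is to it, and I would present it in roughly two sentences.

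For completeness I would also sketch the self-contained route, in case one prefers not to lean on the Big-O asymptotic: each summand $\frac{\log p}{p-1}$ is positive, and $\frac{\log p}{p-1}>\frac{\log p}{p}$, so it suffices to show $\sum_{p\le n}\frac{\log p}{p}$ diverges. The latter is Mertens' first theorem, $\sum_{p\le n}\frac{\log p}{p}=\log n+\mathcal{O}(1)$, which is exactly the content invoked (via the comparison in the proof of Theorem \ref{big o theorem}) earlier in the paper; alternatively one can compare term-by-term with $\sum_{p\le n}\frac{1}{p}$, whose divergence is classical. I would only gesture at this alternative rather than reprove Mertens.

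The only mild point worth a remark is the phrasing ``for $p\ge 2$'' in the statement: the index $p$ ranges over all primes, the smallest being $2$, so the assertion is simply that the series over all primes $p$ diverges as $n\to\infty$; no genuine obstacle arises here. Indeed there is no real obstacle anywhere in this corollary — it is an immediate consequence of Theorem \ref{big o theorem}, included as a conceptual endpoint — so the ``hard part'' is nil, and the proof should be kept to a few lines invoking the preceding theorem and the unboundedness of $\log n$.
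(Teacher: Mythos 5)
Your proposal is correct and matches the paper's intent exactly: the paper states this corollary without a separate proof, treating it as an immediate consequence of Theorem \ref{big o theorem}, which is precisely the deduction you spell out ($\log n + \mathcal{O}(1)\to\infty$, so the nonnegative partial sums are unbounded). Your remark that the lower bound $\log n - M$ is what actually matters is a worthwhile addition, since the paper's own proof of Theorem \ref{big o theorem} only explicitly argues an upper bound.
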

Even though it seems that the growth rate of the sum of $\operatorname{log}p/(p-1)$ is quite slow (here, we did not impose the condition that $p \not\equiv 1\text{ }(\operatorname{mod}4)$ unlike Theorem \ref{upper bound for logpp-1 p equiv 3 mod 4}), it is interesting that this sum over all $p\ge 2$ diverges, which is analogous to the case of the harmonic series. Moreover, if one were to attempt to prove Corollary \ref{logpp-1 sum diverges} using the integral test, it would be futile. Back to $\operatorname{log}p/(p-1)$, if we take the sum of all such primes $p$ such that they are not congruent to 1 modulo 4, it actually converges but at a very slow rate. Recall that convergence was established in (79).
\begin{definition}[harmonic series]
    The harmonic series is defined to be the infinite series \begin{align}
        \sum_{n=1}^{\infty}\frac{1}{n}=\frac{1}{1}+\frac{1}{2}+\frac{1}{3}+\hdots
    \end{align}
\end{definition}
\begin{theorem}
    The harmonic series is divergent.
\end{theorem}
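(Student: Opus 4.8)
The plan is to prove divergence by the classical dyadic grouping argument of Oresme, which is in the same elementary spirit as the comparison estimates used earlier in the paper. Write $H_N=\sum_{n=1}^{N}1/n$ for the $N$-th partial sum. Since every summand is positive, the sequence $(H_N)_{N\ge 1}$ is strictly increasing, so it converges if and only if it is bounded; thus it suffices to produce an unbounded subsequence, and the natural choice is $(H_{2^m})_{m\ge 0}$.

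The main step is a lower bound for $H_{2^m}$ obtained by partitioning the indices $n$ with $2^{k-1}<n\le 2^{k}$ into blocks. For each $k\ge 1$ this block contains exactly $2^{k-1}$ integers, and each corresponding term $1/n$ satisfies $1/n\ge 1/2^{k}$; hence the block contributes at least $2^{k-1}\cdot 2^{-k}=1/2$ to the sum. Summing over $k=1,\dots,m$ and adding the initial term $n=1$ gives
\begin{align}
H_{2^m}\ge 1+\sum_{k=1}^{m}\frac12 = 1+\frac{m}{2}.
\end{align}
Consequently, given any $B>0$, any integer $m>2B$ yields $H_{2^m}>B$, so $(H_N)$ is unbounded and the harmonic series diverges.

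The argument needs nothing beyond positivity of the terms and this one elementary estimate, so there is no genuine obstacle here; the only point demanding care is the bookkeeping in the dyadic partition — one must match the number of terms in the $k$-th block, namely $2^{k-1}$, with the smallest term of that block, namely $1/2^{k}$, and check that the ranges $2^{k-1}<n\le 2^{k}$ for $k=1,\dots,m$ are pairwise disjoint and exhaust $\{2,\dots,2^m\}$. As an alternative one could use the integral test already invoked informally in the introduction, comparing $H_N$ with $\int_{1}^{N+1} dx/x=\log(N+1)\to\infty$; either route completes the proof.
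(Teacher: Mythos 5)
Your proof is correct: the dyadic grouping gives $H_{2^m}\ge 1+m/2$, and since the partial sums are increasing, unboundedness of this subsequence establishes divergence. The paper itself states this theorem without any proof at all, so there is nothing to compare against; your Oresme-style argument (or the integral-test alternative you mention) is a perfectly standard and complete way to fill that gap, and the block bookkeeping — $2^{k-1}$ terms each at least $2^{-k}$ — is handled correctly.
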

Now, it suffices to prove the remaining theorem.
\begin{theorem}
    $P_n$ is not a perfect square for $4\le n \le 1830$.
\end{theorem}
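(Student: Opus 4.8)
The plan is to clear this finite range by pairing the arithmetic obstructions already in hand with a modest computer verification. The first step is a parity reduction. By the Corollary the exponent of $2$ in $P_n$ equals $\alpha_2=\lceil n/2\rceil$, and since every perfect square has even $2$-adic valuation, $P_n$ can be a square only when $\lceil n/2\rceil$ is even, i.e. only when $n\equiv 0$ or $3\pmod 4$. This removes at a stroke every $n$ with $n\equiv 1$ or $2\pmod 4$, leaving roughly $900$ candidate values in $[4,1830]$. One may stratify further — writing down the exponent of $5$, of $13$, and of the next few primes $\equiv 1\pmod 4$ as a function of $n$ modulo a suitable modulus eliminates additional residue classes — but this refinement is not needed.

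For each surviving $n$ I would produce an odd prime $p$ whose exponent in $P_n$ is odd. By Lemma \ref{important lemma} any prime dividing $P_n$ is $\equiv 1\pmod 4$, so the search is confined to primes $p\equiv 1\pmod 4$; moreover, recall that every prime dividing $P_n$ is less than $2n$, so $p<2n$. Writing $e_p(m)$ for the exponent of $p$ in $m$, the exponent of such a $p$ in $P_n$ is $\sum_{k=1}^n e_p(k^2+1)$, which is fast to evaluate because, by Theorem \ref{important theorem pj}, the congruence $x^2\equiv -1\pmod{p^j}$ has exactly two solutions in every interval of length $p^j$; one simply counts how many $k\in[1,n]$ lie in the relevant residue classes. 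In practice one usually finds a prime $p$ in the range $(n,2n)$ that divides exactly one of the numbers $k^2+1$, and only to the first power, so that its exponent in $P_n$ equals $1$; for instance $n=7$ is settled by $p=13$, since $5^2+1=26=2\cdot 13$ is the unique multiple of $13$ among $1^2+1,\dots,7^2+1$. When no such clean witness presents itself, one falls back to forming $\prod_{k=1}^n(k^2+1)$ directly and testing squareness — $P_{1830}$ has only a few thousand digits, so this is instantaneous.

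The genuine obstacle here is bookkeeping rather than mathematical depth: the argument must be run for each of the $\sim 900$ admissible $n$, and although every individual verification is cheap (all relevant primes lie below $2n<3660$), a fully handwritten proof would be cumbersome. I would therefore present the check as an explicit finite algorithm accompanied by sample data and, if desired, a complete table of witnessing primes; one may also sharpen the analytic bound of Theorem \ref{upper bound for logpp-1 p equiv 3 mod 4} to lower the cutoff below $1830$, though as already remarked this does not materially shorten the computation. Together with the case $n=3$, where $P_3=2\cdot 5\cdot 10=100=10^2$, this completes the proof of Theorem \ref{pn non square} and establishes that $(b,n)=(10,3)$ is the unique solution of $\prod_{k=1}^n(k^2+1)=b^2$.
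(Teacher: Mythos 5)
Your proposal is sound as a finite verification scheme, but it organizes the computation quite differently from the paper, and the difference is the whole point of this step. You treat each admissible $n$ separately --- after a parity reduction via $\alpha_2=\lceil n/2\rceil$ that the paper does not use --- and hunt for a witness prime of odd exponent for each one, falling back on direct squareness testing of $P_n$ when no clean witness appears; that is roughly $900$ individual verifications, each logically unimpeachable but collectively heavy. The paper instead exploits one observation that collapses almost the entire range at a stroke: if $p=m^2+1$ is prime, then $p\mid(k^2+1)$ exactly when $k\equiv\pm m\ (\operatorname{mod}p)$, so for every $n$ with $m\le n\le p-m-1$ the only $k\le n$ contributing a factor of $p$ is $k=m$ itself, and it contributes $p$ to exactly the first power since $m^2+1=p$; hence $\alpha_p=1$ is odd and $P_n$ fails to be a square for all such $n$ simultaneously. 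Three primes --- $17=4^2+1$ covering $4\le n\le 12$, $101=10^2+1$ covering $10\le n\le 90$, and $1297=36^2+1$ covering $36\le n\le 1260$ --- dispose of $[4,1260]$, leaving only a short residual check on $[1261,1830]$. Your per-$n$ search would certainly terminate, your $2$-adic filter and your $(n,2n)$ witnesses (e.g.\ $p=13$ for $n=7$) are correct supplementary tools, and your closing accounting of $n=3$ is right; the one imprecision is that Lemma \ref{important lemma} constrains only the \emph{odd} primes dividing $P_n$ to be $\equiv 1\ (\operatorname{mod}4)$. But if you adopt the interval-covering witnesses $m^2+1$, the ``bookkeeping obstacle'' you identify essentially disappears: three lines replace the table of $900$ witnesses.
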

\begin{proof}
    We use Lemma \ref{important lemma} and deal with primes that are congruent to 1 modulo 4. Note that $17=4^2+1$ and $17\equiv 1\text{ }(\operatorname{mod}4)$. The next time 17 divides $k^2+1$ occurs when $k=17-4=13$. Thus, $P_n$ is not a perfect square for $4 \le n \le 12$. Next, $101=10^2+1$ and $101 \equiv 1\text{ }(\operatorname{mod}4)$. The next time 101 divides $k^2+1$ occurs when $k=101-10=91$, so $P_n$ is not a perfect square for $10 \le n \le 90$. Next, $1297=36^2+1$. The next time 1297 divides $k^2+1$ occurs when $k=1297-36=1261$. Hence, $P_n$ is not a square for $36\le n \le 1260$.
    \newline
    \newline What is left to show is that $P_n$ is not a perfect square for $1260 \le n \le 1830$. This can be easily shown.
\end{proof}
To conclude, Theorem \ref{pn non square} holds true and the only solution to the diophantine equation in (7) is $(b,n)=(10,3)$.


\begin{thebibliography}{99}
\bibitem{arctangent} T. Amdeberhan, L. A. Medina, and V. H. Moll, 
``Arithmetical properties of a sequence arising from an arctangent sum'', 
\textit{Journal of Number Theory}, vol. 128, no. 6, pp. 1807-1846, 2008. \href{https://doi.org/10.1016/j.jnt.2007.05.008}{doi:10.1016/j.jnt.2007.05.008}.

\bibitem{cilleruelo} J. Cilleruelo, 
``Squares in $(1^2+1)\ldots(n^2+1)$'', 
\textit{Journal of Number Theory}, vol. 128, no. 8, pp. 2488-2491, 2008. 
\href{https://doi.org/10.1016/j.jnt.2007.11.001}{doi:10.1016/j.jnt.2007.11.001}.


\bibitem{cwmo} Singapore Mathematical Society, 
\textit{CWMI 2017}, 
\href{https://sms.math.nus.edu.sg/Simo/CWMO/CWMO-2017_files/Problems_2017.pdf}{https://sms.math.nus.edu.sg/Simo/CWMO/CWMO-2017\_files/Problems\_2017.pdf}.

\bibitem{virginia} C. Vincent, 
\textit{Solving $x^2 \equiv a \,(\operatorname{mod} n)$}, 2017, 
pp. 1-4. 
\href{https://www.uvm.edu/~cvincen1/files/teaching/spring2017-math255/quadraticequation.pdf}{https://www.uvm.edu/~cvincen1/files/teaching/spring2017-math255/quadraticequation.pdf}.

\bibitem{chebyinequality} N. Stamatopoulos and W. Zhiang, \textit{Chebyshev’s theorem on the distribution of prime numbers}, 2021, pp. 2-3. \href{https://metaphor.ethz.ch/x/2021/hs/401-3110-71L/ex/eighth.pdf}{https://metaphor.ethz.ch/x/2021/hs/401-3110-71L/ex/eighth.pdf}.

\bibitem{apostol} T. M. Apostol, \textit{Introduction to Analytic Number Theory}, Undergraduate Texts in Mathematics, Springer, 1976, p. 76. ISBN: 9781441928054.

\bibitem{erdos} D. Galvin, \textit{Erdős’s proof of Bertrand’s postulate}, 2015, pp. 1-4. \href{https://www3.nd.edu/~dgalvin1/pdf/bertrand.pdf}{https://www3.nd.edu/~dgalvin1/pdf/bertrand.pdf}.
\end{thebibliography}
\end{document}